\newtheorem{theorem}{Theorem}[section]
\newtheorem{lemma}[theorem]{Lemma}
\newtheorem{remark}[theorem]{Remark}
\numberwithin{equation}{section}
\def\R{{\mathbb R}}
\def\E{{{\mathbb E}\,}}
\def\N{{\mathbb N}}
\def\Var{{\mathop {{\rm Var\, }}}}
\def\square{{\vcenter{\vbox{\hrule height.3pt
        \hbox{\vrule width.3pt height5pt \kern5pt
           \vrule width.3pt}
        \hrule height.3pt}}}}
\def\tlint{{- \kern-0.85em \int \kern-0.2em}}
\def\dlint{{- \kern-1.05em \int \kern-0.4em}}
\newenvironment{proof}[1][Proof]{\noindent\textit{#1.} }{\hfill \rule{0.5em}{0.5em}}
\begin{document}

\title{Derivatives of local times for some Gaussian fields}
\date{\today}

\author{Minhao Hong and Fangjun Xu\thanks{M. Hong is partially supported by National Natural Science Foundation of China (Grant No.11871219) and ECNU Academic Innovation Promotion Program for Excellent Doctoral Students (YBNLTS2019-010). F. Xu is partially supported by National Natural Science Foundation of China (Grant No.11871219, No.11871220).} \\
}

\maketitle

\begin{abstract}
\noindent  In this article, we consider derivatives of local time for a $(2,d)$-Gaussian field 
\[
Z=\big\{ Z(t,s)= X^{H_1}_t -\widetilde{X}^{H_2}_s, s,t \ge 0\big\},
\] 
where $X^{H_1}$ and $\widetilde{X}^{H_2}$ are two independent processes from a class of $d$-dimensional centered Gaussian processes satisfying certain local nondeterminism property.  We first give a condition for existence of derivatives of the local time. Then, under this condition, we show that derivatives of the local time are H\"{o}lder continuous in both time and space variables. Moreover, under some additional assumptions, we show that this condition is also necessary for existence of derivatives of the local time at the origin.  

\vskip.2cm \noindent {\it Keywords:} Gaussian fields, Derivatives of local time, Local nondeterminism property, H\"{o}lder continuity.

\vskip.2cm \noindent {\it Subject Classification: Primary 60F25;
Secondary 60G15, 60G22.}
\end{abstract}

\section{Introduction}

For $H\in (0,1)$,  let $X^H= \{ X^H_t:\, t\geq 0\} $ be a $d$-dimensional centered Gaussian stochastic process whose components $X^{H,\ell} (1\le \ell \le d)$ are independent and identically distributed, and satisfy the following local nondeterminism property: for any $T>0$, there exists a positive constant $\kappa_{T,m,H}$ depending only on $T$, $m$ and $H$, such that for any $0=t_0<t_1 <\dots < t_m<2T$ and $x_i \in \mathbb{R} (1\le i \le m)$, we have
\begin{equation} \label{ln}
\Var \Big( \sum_{i=1}^m x_i   (X^{H,\ell}_{t_i} -X^{H,\ell}_{t_{i-1}}) \Big)\geq \kappa_{T,m,H}  \sum_{i=1}^m x_i^2 (t_i -t_{i-1})^{2H}.
\end{equation}
Let $G^d_L$ be the class of all such $d$-dimensional centered Gaussian processes $X^H$ and $G^d_{L,U}$ the class of $X^H\in G^d_L$ posessing the following property:  there is a positive constant $C_{T,H}$ depending only on $T$ and $H$ such that $\Var(X^{H,1}_t)\leq C_{T,H}|t|^{2H}$ for all $t\in[0,2T]$.

From results in \cite{sxy}, we can easily see that the $d$-dimensional Gaussian processes given below are in $G^d_{L,U}$:

\noindent
(i) {\it Bifractional Brownian motion (bi-fBm)}. The covariance function for components of this process is given by
\[
\E(X^{H,\ell}_t X^{H,\ell}_s)=2^{-K_0}\big[(t^{2H_0}+s^{2H_0})^{K_0}-|t-s|^{2H_0K_0}\big],
\]
where $H_0\in(0,1)$ and $K_0\in(0,1]$.  Here $H=H_0K_0$ and $K=1$ gives the classic fractional Brownian motion (fBm) case with Hurst parameter $H=H_0$.
 
\noindent
(ii) {\it Subfractional Brownian motion (sub-fBm)}.  The covariance function for components of this process  is given by
\[
\E(X^{H,\ell}_t X^{H,\ell}_s)=t^{2H}+s^{2H}-\frac{1}{2}\big[(t+s)^{2H}+|t-s|^{2H}\big],
\]
where $H\in(0,1)$.  
 
It is well-known that the Dirac function $\delta(x)$ on $\R^d$ can be approximated by 
\[
p_{\varepsilon}(x)=\frac{1}{(2\pi \varepsilon)^{\frac{d}{2}}} e^{-\frac{|x|^2}{2\varepsilon}}=\frac{1}{(2\pi)^d}\int_{\R^d} e^{\iota y\cdot x}e^{-\frac{\varepsilon|y|^2}{2}}\, dy.
\]
So, for multi-index $\mathbf{k}=(k_1,\cdots, k_d)$ with all $k_i$ being nonnegative integers, we can approximate 
\[
\delta^{(\mathbf{k})}=\frac{\partial^{\bf k}}{\partial x^{k_1}_1\cdots \partial x^{k_d}_d} \delta(x)
\]
by 
\[
p^{(\mathbf{k})}_{\varepsilon}(x)=\frac{\partial^\mathbf{k}}{\partial x^{k_1}_1\cdots \partial x^{k_d}_d}p_{\varepsilon}(x) =\frac{\iota^{|\mathbf{k}|}}{(2\pi)^d}\int_{\R^d} \Big(\prod^d_{i=1}y^{k_i}_i\Big)\, e^{\iota y\cdot x}e^{-\frac{\varepsilon|y|^2}{2}}\, dy,
\]
where $|\mathbf{k}|=\sum\limits^d_{i=1}k_i$.

Let $X^{H_1}$ and $\widetilde{X}^{H_2}$ be two independent Gaussian processes in $G^d_L$ with parameters $H_1$ and  $H_2$, respectively. Then 
\begin{align}
Z=\big\{Z(t,s)= X^{H_1}_t -\widetilde{X}^{H_2}_s, s,t \ge 0\big\}  \label{gf}
\end{align}
is a $(2,d)$-Gaussian field. For any $T>0$ and $x\in\R^d$, if
\begin{align}\label{epsilon}
L^{(\mathbf{k})}_{\varepsilon}(T,x):=\int^T_0\int^T_0 p^{(\mathbf{k})}_{\varepsilon}(X^{H_1}_t-\widetilde{X}^{H_2}_s-x)\, ds\, dt
\end{align}
converges to a random variable in $L^p (p\geq 1)$  when $\varepsilon\downarrow 0$, we denote the limit by $L^{(\mathbf{k})}(T,x)$ and call it  the $\mathbf{k}$-th derivatives of local time for the $(2,d)$-Gaussian field $Z$.  If it exists,  $L^{(\mathbf{k})}(T,x)$ admits the following $L^p$-representation 
\begin{align} \label{dlt}
L^{(\mathbf{k})}(T,x)=\int^T_0\int^T_0 \delta^{({\bf k})}(X^{H_1}_t-\widetilde{X}^{H_2}_s+x)\, ds\, dt.
\end{align}

When $\mathbf{k}=\mathbf{0}$, $L^{(\mathbf{0})}(T,x)$ is just the local time of the $(2,d)$-Gaussian field $Z$ at $x$. Local time of Gaussian processes or Gaussian fields are important subjects in probability theory and has a long history, see, e.g., \cite{Be, p, gh, ghr} and references therein. When $x=0$, $L^{(\mathbf{0})}(T,0)$ is also called the intersection local time of $X^{H_1}$ and  $\widetilde{X}^{H_2}$. Recently, intersection local time for independent fBms  and its derivatives have received a lot of attention. If $X^{H_1}$ and  $\widetilde{X}^{H_2}$ are two independent $d$-dimensional fBms with the same Hurst parameter $H$,  Nualart and Ortiz-Latorre in \cite{no} proved that $L^{(\mathbf{0})}(T,0)$ exists in $L^2$ if and only if $Hd<2$. This result was later extended to $(N,d)$-fBms with Hurst parameters $H_1$ and $H_2$ by  Wu and Xiao in \cite{wx}, where they also established the regularity of the corresponding intersection local time. When $\mathbf{k}\neq \mathbf{0}$, Yan in \cite{y} showed that $L^{(1)}_{\varepsilon}(T,x)$ converges in $L^p$ $(p>0)$ if $\frac{1}{H_1}+\frac{1}{H_2}>3$ for the $1$-dimensional fBm case; Guo, Hu and Xiao in \cite{ghx} gave a condition for existence of $L^{(\mathbf{k})}(T,0)$ and showed the exponential integrability of $L^{(\mathbf{k})}(T,0)$ for the $d$-dimensional fBm case. When the $(2,d)$-Gaussian field $Z$ is replaced by a $1$-dimensional fBm, Jaramillo, Nourdin and Peccati in \cite{jnp} gave a sharp condition for existence (in $L^2$) of derivatives of the local time and established their H\"{o}lder continuity in the time variable. 

In this paper we consider existence of $L^{(\mathbf{k})}(T,x)$ for independent processes from a large class of $d$-dimensional centered Gaussian processes including fBms, bi-fBms and sub-fBms. In addition, we do not require that $X^{H_1}$ and $\widetilde{X}^{H_2}$ are both fBms.  They can be different, for example, $X^{H_1}$ can be a bi-fBm while $\widetilde{X}^{H_2}$  a sub-fBm.  We give a mild condition for existence of $L^{(\mathbf{k})}(T,x)$ and then establish its H\"{o}lder continuity in both time and space variables. Our condition is sharp since it is also necessary for existence of $L^{(\mathbf{k})}(T,0)$ if $X^{H_1}$ and $\widetilde{X}^{H_2}$ satisfy certain additional property, which is posed by Gaussian processes, say fBms, bi-fBms and sub-fBms. A reason for focusing on two independent Gaussian processes is that the methodology developed here can be easily used to obtain the corresponding results for one Gaussian process or $k$ $(k\geq 3)$ independent Gaussian processes. Moreover, this paper can be viewed as an extension of \cite{sxy, nx2} where central limit theorems for functionals of $X^{H}_t -\widetilde{X}^{H}_s$ are not available for $H\leq \frac{2}{d+2}$, see \cite{jnp} for this phenomenon in the one $1$-dimensional fBm case. Here the main difficulty comes from the second independent Gaussian process. Especially in the proof of Theorem \ref{thm2},  we need some kind of chaining argument to get the main ingredient in $\E[ |L^{(\mathbf{k})}_{\varepsilon}(T,0)|^2]$ as $\varepsilon\downarrow 0$.

The following are main results of this paper.
\begin{theorem}\label{thm1}
Assume that $X^{H_1}=\{X^{H_1}_t:\, t\geq 0\}$ and $\widetilde{X}^{H_2}=\{\widetilde{X}^{H_2}_t:\, t\geq 0\}$ are two independent Gaussian processes in $G^d_L$ with parameters $H_1, H_2\in(0,1)$, respectively. If $\frac{H_1H_2}{H_1+H_2}(2|\mathbf{k}|+d)<1$, then the $\mathbf{k}$-th derivative of local time $L^{(\mathbf{k})}(T,x)$ exists in $L^p$ for any $p\in[1,\infty)$. Moreover, $L^{(\mathbf{k})}(T,x)$ has a modification which is $\theta_1$-H\"{o}lder continuous in space for all $\theta_1\in(0, 1\wedge (\frac{1}{H_1}+\frac{1}{H_2}-2|\mathbf{k}|-d))$ and $\theta_2$-H\"{o}lder continuous in time for all $\theta_2\in(0, 1-\frac{H_1H_2}{H_1+H_2}(|\mathbf{k}|+d))$.

\end{theorem}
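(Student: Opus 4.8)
The plan is to realize $L^{(\mathbf{k})}(T,x)$ as the $L^p$-limit of the mollified family \eqref{epsilon}, the heart of the matter being a bound on $\E\big[|L^{(\mathbf{k})}_\varepsilon(T,x)|^p\big]$ that is uniform in $\varepsilon$. It suffices to treat $p=2n$ an even integer, the general case following by Jensen's inequality. Inserting the Fourier representation of $p^{(\mathbf{k})}_\varepsilon$ and using that $L^{(\mathbf{k})}_\varepsilon$ is real, I would write
\begin{align*}
\E\big[|L^{(\mathbf{k})}_\varepsilon(T,x)|^{2n}\big]=\frac{1}{(2\pi)^{2nd}}\int_{[0,T]^{4n}}\int_{\R^{2nd}} \Big(\prod_{j=1}^{2n}\prod_{i=1}^d (y^{(j)}_i)^{k_i}\Big)\, e^{-\iota x\cdot\sum_j y^{(j)}}\, \Phi\; e^{-\frac{\varepsilon}{2}\sum_j|y^{(j)}|^2}\, d\mathbf{y}\,d\mathbf{t}\,d\mathbf{s},
\end{align*}
with $2n$ frequencies $y^{(j)}\in\R^d$, $2n$ time pairs $(t_j,s_j)$, and $\Phi=\E\big[\exp\big(\iota\sum_{j} y^{(j)}\cdot Z(t_j,s_j)\big)\big]$. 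Since $X^{H_1}$ and $\widetilde X^{H_2}$ are independent with i.i.d. components, $\Phi$ factors across the two processes and across the $d$ coordinates into Gaussian exponentials. Once a dominating integrable function free of $\varepsilon$ is produced, $\sup_\varepsilon\E[|L^{(\mathbf{k})}_\varepsilon|^p]<\infty$ follows; writing the same expansion for $L^{(\mathbf{k})}_\varepsilon-L^{(\mathbf{k})}_{\varepsilon'}$, where the regularizers contribute a factor tending to $0$ pointwise, dominated convergence shows the family is Cauchy in $L^p$, which yields existence together with the representation \eqref{dlt}.

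The core estimate rests on the local nondeterminism property \eqref{ln}. For each coordinate $\ell$ I bound below the Gaussian exponent $\tfrac12\Var\big(\sum_j y^{(j)}_\ell X^{H_1,\ell}_{t_j}\big)+\tfrac12\Var\big(\sum_j y^{(j)}_\ell \widetilde X^{H_2,\ell}_{s_j}\big)$. Ordering the $t_j$ and rewriting the sum through successive increments, \eqref{ln} turns the first variance into $\kappa\sum_i |u_i|^2(\Delta t_{(i)})^{2H_1}$ for appropriate partial sums $u_i$ of the frequencies, and similarly for the second variance after ordering the $s_j$. Integrating the polynomial against the resulting Gaussian and then the time variables reduces the whole expression to products of elementary integrals of the form $\int_0^T\int_0^T (t^{2H_1}+s^{2H_2})^{-\alpha}\,dt\,ds$, which are finite precisely when $\alpha<\frac1{2H_1}+\frac1{2H_2}$. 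The most singular contribution places the entire polynomial degree $2|\mathbf{k}|$ of a matched pair of frequencies onto a single increment, forcing $\alpha=\tfrac{d+2|\mathbf{k}|}{2}$ and hence the sharp requirement $2|\mathbf{k}|+d<\frac1{H_1}+\frac1{H_2}$, i.e. $\frac{H_1H_2}{H_1+H_2}(2|\mathbf{k}|+d)<1$. The main obstacle is that the orderings of $\{t_j\}$ and $\{s_j\}$ generally disagree, so the two variances cannot be diagonalized in the same increment variables and the integrand does not factor; I would control the resulting non-diagonal quadratic form through lower bounds on its determinant together with a careful accounting of how the $2n|\mathbf{k}|$ units of polynomial degree are distributed among the increments, and organizing this over all relative orderings is exactly the kind of chaining argument highlighted in the introduction — it carries essentially all of the technical weight.

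For the regularity statements I would apply Kolmogorov's continuity criterion to the increments of $L^{(\mathbf{k})}$, whose moments are computed by the same scheme. For the spatial increment the bound $|e^{-\iota y^{(j)}\cdot x}-e^{-\iota y^{(j)}\cdot x'}|\le C|x-x'|^{\theta_1}|y^{(j)}|^{\theta_1}$ inserts an extra frequency factor $|y^{(j)}|^{\theta_1}$ into each copy, raising the effective differentiation order; running the degree bookkeeping above then yields $\E[|L^{(\mathbf{k})}(T,x)-L^{(\mathbf{k})}(T,x')|^p]\le C|x-x'|^{p\theta_1}$ throughout the asserted range of $\theta_1$. For the temporal increment $L^{(\mathbf{k})}(T,x)-L^{(\mathbf{k})}(T',x)$ the time integration is confined to the region in which at least one coordinate lies in an interval of length $|T-T'|$; a scaling analysis of the reduced integrand over this thin set, where the time singularities now interact with the restricted domain, produces $\E[|L^{(\mathbf{k})}(T,x)-L^{(\mathbf{k})}(T',x)|^p]\le C|T-T'|^{p\theta_2}$ in the asserted range of $\theta_2$. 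Letting $p\to\infty$ in Kolmogorov's criterion absorbs the losses of order $d/p$ and delivers the stated Hölder exponents, both of which are positive exactly because $\frac{H_1H_2}{H_1+H_2}(2|\mathbf{k}|+d)<1$.
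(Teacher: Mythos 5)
Your outline follows the same general scheme as the paper (Fourier representation, even moments, local nondeterminism, Kolmogorov's criterion), but it leaves a genuine gap at exactly the point you yourself flag as carrying ``essentially all of the technical weight.'' By independence the exponent splits as $\tfrac12\Var\big(\sum_j y_j\cdot X^{H_1}_{s_j}\big)+\tfrac12\Var\big(\sum_j y_j\cdot \widetilde X^{H_2}_{t_{\sigma(j)}}\big)$, and the two LND lower bounds are diagonal in partial sums taken in \emph{different} orderings; your proposal to handle this via ``lower bounds on the determinant'' of the joint quadratic form is not carried out and does not obviously cope with the polynomial weight $\prod_j|y_j|^{|\mathbf{k}|}$, nor does it explain where the harmonic-mean exponent $\tfrac{H_1H_2}{H_1+H_2}$ in the time singularities comes from. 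The paper's resolution is a single, specific application of H\"older's inequality in the frequency variables with conjugate exponents $\tfrac{H_1+H_2}{H_2}$ and $\tfrac{H_1+H_2}{H_1}$:
\begin{align*}
\int_{\R^{md}} e^{-\frac12\Var(\sum_j y_j\cdot X^{H_1}_{s_j})-\frac12\Var(\sum_j y_j\cdot \widetilde X^{H_2}_{t_{\sigma(j)}})}\prod_j|y_j|^{|\mathbf{k}|}dy
\le \Big(\int_{\R^{md}} e^{-\frac12\Var(\sum_j y_j\cdot X^{H_1}_{s_j})}\prod_j|y_j|^{|\mathbf{k}|}dy\Big)^{\frac{H_2}{H_1+H_2}}
\Big(\int_{\R^{md}} e^{-\frac12\Var(\sum_j y_j\cdot \widetilde X^{H_2}_{t_{\sigma(j)}})}\prod_j|y_j|^{|\mathbf{k}|}dy\Big)^{\frac{H_1}{H_1+H_2}},
\end{align*}
after which each factor involves only one process in its own natural ordering, Lemma 2.1 applies separately to each, the powers $\tfrac{H_2}{H_1+H_2}$ and $\tfrac{H_1}{H_1+H_2}$ convert $(s_j-s_{j-1})^{-2H_1(\cdot)}$ and $(t_j-t_{j-1})^{-2H_2(\cdot)}$ into exponents $-\tfrac{H_1H_2}{H_1+H_2}(d+|\mathbf{k}|(p_j+\overline p_{j-1}))$, and the $s$- and $t$-integrals decouple into Dirichlet-type integrals (Lemma A.1). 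Without this (or an equivalent interpolation, e.g.\ via log-concavity of the determinant, which still would not handle the polynomial factor by itself), your claimed reduction to $\int_0^T\int_0^T(t^{2H_1}+s^{2H_2})^{-\alpha}\,dt\,ds$ does not follow for general even moments; that two-dimensional integral is in fact the shape of the \emph{lower} bound used in Theorem 1.2, not of the moment upper bound needed here.

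The remainder of your plan is sound and matches the paper: the uniform moment bound plus dominated convergence on $L^{(\mathbf{k})}_\varepsilon-L^{(\mathbf{k})}_{\eta}$ gives the $L^p$ limit; the spatial increments are handled by $|e^{-\iota y\cdot z}-1|\le c_\alpha|z|^\alpha|y|^\alpha$, which costs an extra $|y_j|^\alpha$ per factor and yields the stated range of $\theta_1$; and the temporal increments are confined to a thin slab of width $h$, where the same singularity bookkeeping (the paper's Lemmas A.1--A.2) produces the factor $h^{m(1-\frac{H_1H_2}{H_1+H_2}(|\mathbf{k}|+d))}$ and hence the stated range of $\theta_2$ after letting $m\to\infty$ in Kolmogorov's criterion. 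But as written, the central decoupling step is asserted rather than proved, so the argument is incomplete.
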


\begin{theorem}\label{thm2}
Assume that $X^{H_1}=\{X^{H_1}_t:\, t\geq 0\}$ and $\widetilde{X}^{H_2}=\{\widetilde{X}^{H_2}_t:\, t\geq 0\}$ are two independent Gaussian processes in $G^d_{L,U}$ with parameters $H_1, H_2\in(0,1)$, respectively. The following statements are true: 
\begin{itemize}
\item[(i)] if $\frac{H_1H_2}{H_1+H_2}d\geq 1$, then $L^{(\mathbf{0})}_{\varepsilon}(T,0)$ diverges in $L^2$ as $\varepsilon\downarrow 0$; 
\item[(ii)] if $\frac{H_1H_2}{H_1+H_2}d\leq 1$ and $\frac{H_1H_2}{H_1+H_2}(2|\mathbf{k}|+d)\geq 1$, then $L^{(\mathbf{k})}_{\varepsilon}(T,0)$ diverges in $L^2$ as $\varepsilon\downarrow 0$.
\end{itemize}
\end{theorem}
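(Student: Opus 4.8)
The plan is to compute the second moment $\E\big[|L^{(\mathbf k)}_\varepsilon(T,0)|^2\big]$ explicitly and show it diverges as $\varepsilon\downarrow0$. Writing each $p^{(\mathbf k)}_\varepsilon$ through its Fourier representation, using the independence of the $d$ coordinates together with the Gaussian identity
$\E\big[e^{\iota(y\cdot Z(t,s)+\eta\cdot Z(t',s'))}\big]=\exp\big(-\tfrac12\sum_{\ell=1}^{d}(y_\ell,\eta_\ell)\Sigma(y_\ell,\eta_\ell)^{\top}\big)$, where $\Sigma=\Sigma(t,s,t',s')=\sigma^{H_1}_{t,t'}+\sigma^{H_2}_{s,s'}$ is the $2\times2$ covariance matrix of the single pair $(Z^{1}(t,s),Z^{1}(t',s'))$, the Fourier integrand factorizes over coordinates and I obtain
$$\E\big[|L^{(\mathbf k)}_\varepsilon(T,0)|^2\big]=\int_{[0,T]^4}G_\varepsilon\,dt\,dt'\,ds\,ds',\qquad G_\varepsilon=\frac{(-1)^{|\mathbf k|}}{(2\pi)^d}\big(\det(\Sigma+\varepsilon I)\big)^{-d/2}\prod_{\ell=1}^{d}\E[U^{k_\ell}V^{k_\ell}],$$
where $(U,V)$ is centered Gaussian with covariance $(\Sigma+\varepsilon I)^{-1}$. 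The whole argument then comes down to controlling the diagonal singularity of $G_\varepsilon$.

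For part (i) the multi-index is $\mathbf 0$, the moment product is $1$, and $G_\varepsilon=(2\pi)^{-d}(\det(\Sigma+\varepsilon I))^{-d/2}$ is nonnegative and increases as $\varepsilon\downarrow0$; monotone convergence reduces the problem to finiteness of $\int_{[0,T]^4}(\det\Sigma)^{-d/2}$. I would establish the two-sided estimate $\det\Sigma\asymp\Var(Z^{1}(t,s))\,(|t-t'|^{2H_1}+|s-s'|^{2H_2})$ near the diagonal: the lower bound follows from $\det\Sigma=\Var(Z^{1}(t,s))\cdot\min_\lambda\Var(Z^{1}(t',s')-\lambda Z^{1}(t,s))$, decomposing the increments of $X^{H_1}$ and $\widetilde X^{H_2}$ along the chains $0<t<t'$ and $0<s<s'$ and invoking the local nondeterminism \eqref{ln}, while the upper bound (taking $\lambda=1$) uses the variance control available in $G^d_{L,U}$. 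The substitution $a=u^{2H_1},\,b=w^{2H_2}$ turns $\int(|u|^{2H_1}+|w|^{2H_2})^{-d/2}$ into a Beta-type integral $\int a^{1/(2H_1)-1}b^{1/(2H_2)-1}(a+b)^{-d/2}$ that diverges exactly when $\frac1{H_1}+\frac1{H_2}\le d$, i.e.\ when $\frac{H_1H_2}{H_1+H_2}d\ge1$.

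For part (ii) the factor $(-1)^{|\mathbf k|}\prod_\ell\E[U^{k_\ell}V^{k_\ell}]$ is no longer manifestly positive, which is the main obstacle. I would localize to a neighborhood of the diagonal bounded away from the axes, $\delta\le t,s$ and $|t-t'|,|s-s'|\le\eta$; there $\Sigma_{12}=\Cov(Z^{1}(t,s),Z^{1}(t',s'))\to\Var(Z^{1}(t,s))>0$, so the pair $(U,-V)$ has nonnegative correlation $\widetilde\rho=\Sigma_{12}/\sqrt{(\Sigma_{11}+\varepsilon)(\Sigma_{22}+\varepsilon)}$ tending to $1$. Since $(-1)^{|\mathbf k|}\prod_\ell\E[U^{k_\ell}V^{k_\ell}]=\prod_\ell\E[U^{k_\ell}(-V)^{k_\ell}]$ and, by the Hermite/Mehler expansion, $\E[\hat U^{k}\hat V^{k}]$ is a polynomial in $\widetilde\rho$ with nonnegative coefficients equal to $(2k-1)!!$ at $\widetilde\rho=1$, each factor is nonnegative and, for $\eta,\varepsilon$ small, bounded below by a constant times $(\det(\Sigma+\varepsilon I))^{-k_\ell}$; hence $G_\varepsilon\gtrsim(\det(\Sigma+\varepsilon I))^{-(d+2|\mathbf k|)/2}$ on the localized region. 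On the complementary region $\det\Sigma$ is bounded below and $G_\varepsilon$ is bounded uniformly in $\varepsilon$, so its contribution stays finite. Applying monotone convergence to the localized part, together with the same Beta-integral computation now with exponent $(d+2|\mathbf k|)/2$, shows divergence precisely when $\frac1{H_1}+\frac1{H_2}\le d+2|\mathbf k|$, i.e.\ $\frac{H_1H_2}{H_1+H_2}(2|\mathbf k|+d)\ge1$.

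I expect the sign bookkeeping, rather than the Beta-integral asymptotics, to be the hard part, and the localization to the region where $\Sigma_{12}$ is positive is exactly what makes it tractable. An alternative to the association argument is to expand $\E[U^{k_\ell}V^{k_\ell}]$ by Wick's theorem and isolate the most singular pairing (all cross-contractions), whose sign after combining with $(-1)^{|\mathbf k|}$ is positive and of order $(\det\Sigma)^{-|\mathbf k|}$; carrying this out requires a careful chaining and bookkeeping of the remaining pairings to verify that they are genuinely of lower order, which is delicate in the borderline regime $d\le\frac1{H_1}+\frac1{H_2}\le d+2|\mathbf k|$ covered by the hypotheses of (ii).
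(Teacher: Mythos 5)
Your reduction of the second moment to the kernel $G_\varepsilon=\frac{(-1)^{|\mathbf k|}}{(2\pi)^d}\big(\det(\Sigma+\varepsilon I)\big)^{-d/2}\prod_{\ell}\E[U^{k_\ell}V^{k_\ell}]$ is correct, and for part (i) the monotone-convergence argument works; note, however, that the divergence is most easily extracted not from the diagonal but from the origin, via the Cauchy--Schwarz bound $\det\Sigma\le\Sigma_{11}\Sigma_{22}\le C(t^{2H_1}+s^{2H_2})(t'^{2H_1}+s'^{2H_2})$, which uses only the one-point variance bound built into $G^d_{L,U}$. Your two-sided estimate $\det\Sigma\asymp\Var(Z^{1}(t,s))\,(|t-t'|^{2H_1}+|s-s'|^{2H_2})$ needs an upper bound on increment variances, $\Var(X^{H,1}_{t'}-X^{H,1}_{t})\lesssim|t-t'|^{2H}$, which is \emph{not} part of the definition of $G^d_{L,U}$ (only $\Var(X^{H,1}_t)\le C_{T,H}|t|^{2H}$ is assumed) and cannot be derived from the local nondeterminism lower bound \eqref{ln}; this also undercuts the localization in part (ii), where you need $\Sigma_{12}\to\Sigma_{11}$ and $\det\Sigma\lesssim|t-t'|^{2H_1}+|s-s'|^{2H_2}$ near the diagonal. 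This is why the paper works with the singularity at $t=s=0$ rather than the one on the diagonal.

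The more serious gap is in part (ii). After restricting to $\{\delta\le t,s,\ |t-t'|,|s-s'|\le\eta\}$ you assert that on the complementary region $\det\Sigma$ is bounded below and $G_\varepsilon$ is bounded uniformly in $\varepsilon$. That is false: the complement still contains the whole diagonal $\{(t,s)=(t',s')\}$ near the axes and a neighborhood of the origin, on both of which $\det\Sigma$ vanishes, and there the integrand has no definite sign. Since the total integral is signed, divergence of the localized piece does not imply divergence of the whole; you must show the complementary contribution is of strictly lower order, and that is precisely the hard part of the theorem. The paper handles it by a different decomposition: it replaces $\prod_i x_{1,i}^{k_i}$ by $\prod_i(-x_{2,i})^{k_i}$ to produce a manifestly nonnegative main term $F^{(\mathbf k)}_{T,\varepsilon}$, whose divergence rate \eqref{fkt} is read off from the origin singularity via Lemma \ref{a3}, and then bounds the full difference $\big|\E[|L^{(\mathbf k)}_{\varepsilon}(T,0)|^2]-F^{(\mathbf k)}_{T,\varepsilon}\big|$ by the error terms $K^{\ell}_{\varepsilon,1}$, $K^{\ell}_{\varepsilon,2}$, shown to be of strictly smaller order using \eqref{ln}, H\"older's inequality and the chaining Lemma \ref{lma2}. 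Your alternative sketch (isolating the all-cross-contraction Wick pairing and checking the remaining pairings are of lower order) is essentially a reformulation of this comparison step, but, as you acknowledge, it is left undone; until it is carried out, the proof of (ii) is incomplete.
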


\begin{remark}  For independent $d$-dimensional centered Gaussian processes $X^{H_1}$ and $\widetilde{X}^{H_2}$ in $G^d_{L,U}$,  Theorem \ref{thm1} and statement (i) in Theorem \ref{thm2} imply that $L^{(\mathbf{0})}(T,0)$ exists in $L^2$ if and only if $\frac{H_1H_2}{H_1+H_2}d<1$. Theorem \ref{thm1} and statement (ii) in Theorem \ref{thm2} say that, if $L^{(\mathbf{0})}(T,0)$ exists in $L^2$, then $L^{(\mathbf{k})}(T,0) (\mathbf{k}\neq \mathbf{0})$ exists in $L^2$ if and only if $\frac{H_1H_2}{H_1+H_2}(2|\mathbf{k}|+d)<1$.  In the proof of Theorem \ref{thm2} below, we also give concrete divergence rates of $\E |L^{(\mathbf{k})}_{\varepsilon}(T,0)|^2$ as $\varepsilon\downarrow 0$ in different cases.  The asymptotic behavior of $L^{(\mathbf{k})}_{\varepsilon}(T,0)$ as $\varepsilon\downarrow 0$ in these cases will be studied in a future paper.
\end{remark}

\begin{remark}
For $N\in\N$, define the $(N, d)$-Gaussian field 
\[
 Z^{N}=\Big\{\sum^{N}_{j=1} X^{j,H_j}_{t_j}:\; t_j\geq 0,\, j=1,\dots,d\Big\},
\]
where $X^{j,H_j}_{t_j}$ are independent Gaussian processes in $G^d_{L}$. Replace $Z$ in $L^{(\mathbf{k})}(T,x)$ and $L^{(\mathbf{k})}(T,x)$ by $Z^{N}$ and denote the new terms as $L^{(\mathbf{k})}_{N}(T,x)$ and $L^{(\mathbf{k})}_{N, \varepsilon}(T,x)$, respectively. Using similar arguments as in the proof of Theorems \ref{thm1} and \ref{thm2},  we can also obtain that
\begin{enumerate}
\item[(1)] If $2|\mathbf{k}|+d<\sum\limits^d_{j=1}H^{-1}_j$, $L^{(\mathbf{k})}_{N}(T,x)$ exists in $L^p$ for any $p\in[1,\infty)$. Moreover, $L^{(\mathbf{k})}_{N}(T,x)$ has a modification which is $\theta_1$-H\"{o}lder continuous in space for all $\theta_1\in(0, 1\wedge (\sum\limits^d_{j=1}\frac{1}{H_j}-2|\mathbf{k}|-d))$ and $\theta_2$-H\"{o}lder continuous in time for all $\theta_2\in(0, 1-\frac{|\mathbf{k}|+d}{\sum^d_{j=1}H^{-1}_j})$.
\item[(2)]  If $d\geq \sum\limits^d_{j=1}H^{-1}_j$ and $X^{j,H_j}_{t_j}\in G^d_{L,U}$ for each $j=1,2\dots,\ell$, then $L^{(\mathbf{0})}_{N,\varepsilon}(T,0)$ diverges in $L^2$ as $\varepsilon\downarrow 0$.
\item[(3)]  If $(2|\mathbf{k}|+d)\geq \sum\limits^d_{j=1}H^{-1}_j\geq d$ and $X^{j,H_j}_{t_j}\in G^d_{L,U}$ for each $j=1,2\dots,\ell$, then $L^{(\mathbf{k})}_{N,\varepsilon}(T,0)$ diverges in $L^2$ as $\varepsilon\downarrow 0$.
\end{enumerate}

\end{remark}

After some preliminaries in Section 2, Section 3 is devoted to the proofs of Theorems \ref{thm1} and \ref{thm2}. Throughout this paper, if not mentioned otherwise, the letter $c$, 
with or without a subscript, denotes a generic positive finite constant whose exact value is independent of $m$ and may change from line to line.  For any $x,y\in\R^d$, we use $x\cdot y$ to denote the usual inner product  and $|x|=(\sum\limits^d_{i=1}|x_i|^2)^{1/2}$.  Moreover, we use $\iota$ to denote $\sqrt{-1}$.

\bigskip

\section{Preliminaries}

In this section, we give two lemmas for Gaussian processes $X^H$ in $G^d_{L}$. Lemma \ref{lma1} is needed in the proof of Theorem \ref{thm1}, while Lemma \ref{lma2} plays an important role in the proof of Theorem \ref{thm2}.

\begin{lemma} \label{lma1}  For $0<s_1<s_2<\cdots<s_m<2T$, $k\in \N\cup\{0\}$ and $\varepsilon\geq 0$, there exists a constant $C_{T, k, m,H}$ depending only  on $T$, $k$, $m$ and $H$ such that 
\begin{align*}
&\int_{\R^{md}} \exp\bigg(-\frac{1}{2}\Var\Big(\sum\limits^m_{j=1} y_j \cdot X^{H}_{s_j}\Big)-\frac{\varepsilon}{2}\sum\limits^m_{j=1}|y_j|^2\bigg) \prod^m_{j=1} |y_{j}|^{k}\, dy\\
&\qquad\qquad \leq  C_{T, k, m,H}  \sum_{\mathcal{S}} \prod^{m}_{j=1}  \Big[(s_j-s_{j-1})^{2H}+\varepsilon\Big]^{-\frac{1+k(p_j+\overline{p}_{j-1})}{2}},
\end{align*}
where $\mathcal{S}=\big\{p_i, \overline{p}_i:  p_i\in \{0,1\}, p_i+\overline{p}_i=1,\, i=1,\dots,m-1, p_m=1\big\}$.
\end{lemma}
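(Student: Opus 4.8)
The plan is to turn the left-hand side into a product of one-variable Gaussian integrals by passing to the ``dual'' variables $w_i=\sum_{j\ge i}y_j\in\R^d$ (for $i=1,\dots,m$, with the convention $w_{m+1}=0$) and then splitting the polynomial factor $\prod_j|y_j|^k$ into the monomials indexed by $\mathcal{S}$. Writing $s_0=0$ and using $X^{H}_{s_j}=\sum_{i\le j}(X^{H}_{s_i}-X^{H}_{s_{i-1}})$, one gets $\sum_{j=1}^m y_j\cdot X^{H}_{s_j}=\sum_{i=1}^m w_i\cdot(X^{H}_{s_i}-X^{H}_{s_{i-1}})$. Applying the local nondeterminism property \eqref{ln} to each of the $d$ i.i.d.\ components and summing over them yields
\[
\Var\Big(\sum_{j=1}^m y_j\cdot X^{H}_{s_j}\Big)\ \ge\ \kappa_{T,m,H}\sum_{i=1}^m |w_i|^2\,(s_i-s_{i-1})^{2H}.
\]

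First I would establish the combined lower bound
\[
Q(y):=\Var\Big(\sum_j y_j\cdot X^{H}_{s_j}\Big)+\varepsilon\sum_j|y_j|^2\ \ge\ c\sum_{i=1}^m|w_i|^2\big[(s_i-s_{i-1})^{2H}+\varepsilon\big],
\]
which is the step I expect to be the main obstacle: the $\varepsilon$-term is diagonal in the $y$-variables, whereas the bound above is diagonal in the $w$-variables. The $(s_i-s_{i-1})^{2H}$ part is immediate from the previous display (take $c\le\kappa_{T,m,H}$). For the $\varepsilon$ part one must show $\varepsilon\sum_j|y_j|^2=\varepsilon\sum_i|w_i-w_{i+1}|^2\ge c_m\,\varepsilon\sum_i|w_i|^2$. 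This holds because the difference operator $w\mapsto(w_i-w_{i+1})_{i=1}^m$ on $(\R^d)^m$ is invertible, its inverse being the upper-triangular all-ones operator ($w_i=\sum_{j\ge i}(w_j-w_{j+1})$), whose operator norm is at most $m$; hence $\sum_i|w_i-w_{i+1}|^2\ge m^{-2}\sum_i|w_i|^2$. The key point is that the constant is permitted to depend on $m$, which is exactly what lets the $\varepsilon$ appear in every factor.

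Granting this, I would change variables from $y$ to $w$ (the Jacobian is $1$), bound $\exp(-\tfrac12 Q)\le\exp\big(-\tfrac c2\sum_i|w_i|^2\lambda_i\big)$ with $\lambda_i:=(s_i-s_{i-1})^{2H}+\varepsilon$, and rewrite $\prod_j|y_j|^k=\prod_j|w_j-w_{j+1}|^k$. Using $|w_j-w_{j+1}|^k\le 2^k(|w_j|^k+|w_{j+1}|^k)$ and expanding the product, each factor $j$ selects either $|w_j|^k$ (set $p_j=1$) or $|w_{j+1}|^k$ (set $\overline p_j=1$); since $w_{m+1}=0$ the last factor forces $p_m=1$, and $w_1$ has no left neighbour so $\overline p_0=0$. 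Collecting exponents, the power of $|w_i|$ in each monomial is exactly $k(p_i+\overline p_{i-1})$, which reproduces the index set $\mathcal{S}$. Finally $\exp\big(-\tfrac c2\sum_i|w_i|^2\lambda_i\big)=\prod_i\exp\big(-\tfrac c2|w_i|^2\lambda_i\big)$ makes the integral factorize over $i$, and the scaling $w_i=\lambda_i^{-1/2}v$ in $\R^d$ gives
\[
\int_{\R^d}e^{-\frac c2|w_i|^2\lambda_i}|w_i|^{k(p_i+\overline p_{i-1})}\,dw_i\ =\ C\,\lambda_i^{-\frac{d+k(p_i+\overline p_{i-1})}{2}},
\]
with $C$ finite since $k(p_i+\overline p_{i-1})\le 2k$. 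Taking the product over $i$ and summing over $\mathcal{S}$ then yields the asserted estimate, the per-factor exponent being $\tfrac{d+k(p_i+\overline p_{i-1})}{2}$.
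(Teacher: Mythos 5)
Your proof is correct and follows essentially the same route as the paper's: the substitution $w_i=\sum_{j\ge i}y_j$, the local nondeterminism bound, the transfer of the $\varepsilon$-term to the new variables via the invertibility of the difference operator (the paper packages this as Lemma \ref{a4}, with constant $\tfrac{2}{m(m+1)}$ in place of your $m^{-2}$), the binomial splitting indexed by $\mathcal{S}$, and the factorized Gaussian integrals. One remark: your computation yields the per-factor exponent $-\tfrac{d+k(p_j+\overline{p}_{j-1})}{2}$, which is exactly what the paper uses later in the proof of Theorem \ref{thm1}; the exponent $-\tfrac{1+k(p_j+\overline{p}_{j-1})}{2}$ printed in the lemma's statement appears to be a typo (correct only for $d=1$), so your version is the one that should be recorded.
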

\begin{proof}
Make the change of variables $x_j=\sum\limits^m_{k=j} y_k$ for $j=1,2,\cdots,m$ with the convention $x_{m+1}=0$. Then, using the local nondeterminism property (\ref{ln}) and Lemma \ref{a4}, we can obtain
\begin{align*}
&\int_{\R^{md}}  \exp\bigg(-\frac{1}{2}\Var\Big(\sum\limits^m_{j=1} y_j \cdot X^{H}_{s_j}\Big)-\frac{\varepsilon}{2}\sum\limits^m_{j=1}|y_j|^2\bigg)\prod^m_{j=1} |y_{j}|^{k}\, dy\\
&\leq \int_{\R^{md}} \exp\bigg(-\frac{\kappa_{T, m,H} }{2}\sum\limits^m_{j=1} |x_j|^2\Big[(s_j-s_{j-1})^{2H}+\frac{2\varepsilon}{m(m+1)\kappa_{T, m,H}}\Big]\bigg)\prod^m_{j=1} |x_j-x_{j+1}|^{k}\, dx.
\end{align*}

Let $c=\kappa_{T, m,H} \wedge \frac{2}{m(m+1)}$. Then
\begin{align*}
&\int_{\R^{md}}  \exp\bigg(-\frac{1}{2}\Var\Big(\sum\limits^m_{j=1} y_j \cdot X^{H}_{s_j}\Big)-\frac{\varepsilon}{2}\sum\limits^m_{j=1}|y_j|^2\bigg)\prod^m_{j=1} |y_{j}|^{k}\, dy\\
&\leq  2^{km}\int_{\R^{md}} \exp\bigg(-\frac{c}{2} \sum\limits^m_{j=1} |x_j|^2\big[(s_j-s_{j-1})^{2H}+\varepsilon\big]\bigg) \prod^{m}_{j=1} (|x_j|^k+|x_{j+1}|^{k})\, dx\\
&= 2^{km} \sum_{\mathcal{S}}  \int_{\R^{md}} \exp\bigg(-\frac{c}{2} \sum\limits^m_{j=1} |x_j|^2\big[(s_j-s_{j-1})^{2H}+\varepsilon\big]\bigg) \prod^{m}_{j=1} |x_j|^{k(p_j+\overline{p}_{j-1})}\, dx\\
&\leq C_{T, k, m,H} \sum_{\mathcal{S}} \prod^{m}_{j=1}  \Big[(s_j-s_{j-1})^{2H}+\varepsilon\Big]^{-\frac{1+k(p_j+\overline{p}_{j-1})}{2}}.
\end{align*}
This gives the desired result.
\end{proof}

\begin{lemma} \label{lma2} For any $p\geq 1$ and $m\in\N$, there exists a positive constant $C_{T,m, H,p}$ depending on $T$, $m$, $H$ and $p$ such that
\begin{align*} 
&\bigg(\int_{[0,T]^m_<} \exp\Big(-\frac{1}{2}\Var\big(\sum\limits^m_{j=1}y_j \cdot X^{H}_{u_i}\big)\Big)\, du\bigg)^p\\
&\qquad\qquad \leq C^m_{T,m,H,p}  \int_{[0,T]^m_<} \exp\bigg(-\frac{\kappa_{T, m,H}}{2}\sum\limits^m_{j=1}\Big|\sum\limits^m_{i=j}y_i\Big|^2 (\Delta u_j)^{\frac{2H}{p}}\bigg)\, du,
\end{align*}
where 
\[
[0,T]^m_<=\big\{0=u_0<u_1<u_2\dots<u_m<T\big\}
\]
and $\Delta u_j=u_j-u_{j-1}$ for $j=1,2,\dots,m$.
\end{lemma}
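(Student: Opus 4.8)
The plan is to first strip the Gaussian structure off the left-hand side using the local nondeterminism property, reducing the statement to an elementary inequality about integrals over a simplex, and then to prove that inequality by a change of variables that matches the exponents $2H$ and $2H/p$.

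\emph{Reduction to an analytic inequality.} Since the components $X^{H,\ell}$ are i.i.d., $\Var\big(\sum_{j=1}^m y_j\cdot X^{H}_{u_j}\big)=\sum_{\ell=1}^d\Var\big(\sum_{j=1}^m y_{j,\ell}X^{H,\ell}_{u_j}\big)$. On the set $0=u_0<u_1<\cdots<u_m<T$, summation by parts gives $\sum_{j=1}^m y_{j,\ell}X^{H,\ell}_{u_j}=\sum_{j=1}^m b_{j,\ell}\,(X^{H,\ell}_{u_j}-X^{H,\ell}_{u_{j-1}})$ with $b_{j,\ell}=\sum_{i=j}^m y_{i,\ell}$ and $X^{H,\ell}_{u_0}=0$. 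Applying (\ref{ln}) coordinatewise and summing over $\ell$ yields $\Var\big(\sum_{j}y_j\cdot X^{H}_{u_j}\big)\ge\kappa_{T,m,H}\sum_{j}|b_j|^2(\Delta u_j)^{2H}$, where $b_j=\sum_{i=j}^m y_i$. Hence the inner exponential on the left is dominated pointwise by $\exp\big(-\frac{\kappa}{2}\sum_j a_j(\Delta u_j)^{2H}\big)$ with $\kappa=\kappa_{T,m,H}$ and $a_j=|b_j|^2$. Passing to the increment coordinates $v_j=\Delta u_j$, a measure-preserving change carrying $[0,T]^m_<$ onto $D=\{v\in(0,\infty)^m:\sum_j v_j<T\}$, the lemma reduces to proving, uniformly in $(a_j)\in[0,\infty)^m$, the analytic inequality
\begin{equation*}
\Big(\int_{D}e^{-\frac{\kappa}{2}\sum_j a_j v_j^{2H}}\,dv\Big)^{p}\le C^{m}\int_{D}e^{-\frac{\kappa}{2}\sum_j a_j v_j^{2H/p}}\,dv.
\end{equation*}

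\emph{Aligning the exponents and decoupling the power.} The only discrepancy between the two sides is the exponent $2H$ versus $2H/p$, which I would remove on the right by the substitution $w_j=v_j^{\,p}$: then $w_j^{2H/p}=v_j^{2H}$, the region $\{\sum w_j<T\}$ becomes $\{\sum v_j^{\,p}<T\}$, and the Jacobian produces the weight $\prod_j v_j^{\,p-1}$. To treat the $p$-th power on the left I would write it as an integral over $p$ independent copies of $D$ and use the elementary bound $\sum_{r=1}^p\big(v_j^{(r)}\big)^{2H}\ge\big(\max_r v_j^{(r)}\big)^{2H}$, valid for every $H\in(0,1)$ with no convexity assumption, to collapse the $p$ copies of each coordinate into a single maximum. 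Enlarging $D^{p}$ to the cube $(0,T)^{mp}$ (legitimate, as the integrand is now nonnegative and depends only on the $m$ maxima) factorizes the integral over $j$, and the identity $\int_{(0,T)^p}h(\max_r x_r)\,dx=\int_0^T h(s)\,p\,s^{p-1}\,ds$ turns each factor into a one-dimensional integral carrying precisely the weight $s^{p-1}$ generated by $w_j=v_j^{\,p}$ on the right.

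\emph{Region comparison and conclusion.} After these steps both sides are, up to the common factor $p^{m}$, built from the one-dimensional kernels $\phi_a(s)=e^{-\frac{\kappa}{2}a s^{2H}}s^{p-1}$: the left becomes the product $\prod_j\int_0^T\phi_{a_j}$, while the right is $\int_{\{\sum_j v_j^{\,p}<T\}}\prod_j\phi_{a_j}(v_j)\,dv$. Since $\{\sum_j v_j^{\,p}<T\}$ contains the smaller cube $\big(0,(T/m)^{1/p}\big)^{m}$, it suffices to establish the single one-dimensional estimate $\int_0^T\phi_a\le C\int_0^{(T/m)^{1/p}}\phi_a$ uniformly in $a\ge0$ and then multiply over $j$, which yields the factor $C^{m}$. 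This last uniform bound follows from a soft scaling argument: for $0<b\le T$ the ratio $\int_0^T\phi_a\big/\int_0^b\phi_a$ is continuous in $a$ and has finite limits as $a\downarrow0$ (the value $(T/b)^{p}$) and as $a\uparrow\infty$ (the value $1$, since rescaling $s\mapsto a^{-1/(2H)}s$ shows the mass concentrates near the origin), hence it is bounded. I expect this region comparison to be the main obstacle: the entire difficulty is to show that replacing the sum-constrained simplex by a product region costs only a constant of the shape $C^{m}_{T,m,H,p}$ that is \emph{uniform in the frequencies} $(a_j)$, equivalently uniform in $y$, which is exactly what is required when the lemma is afterwards integrated against $dy$ in the proof of Theorem \ref{thm2}.
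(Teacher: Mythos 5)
Your reduction via local nondeterminism and summation by parts, your final one-dimensional uniform bound (continuity of the ratio in $a$ plus finite limits as $a\downarrow 0$ and $a\uparrow\infty$), and your cube-inside-simplex region comparison all match the paper's proof. But there is a genuine gap in the middle: the step where you ``write the $p$-th power as an integral over $p$ independent copies of $D$'' and then collapse the copies via $\sum_{r=1}^p\big(v_j^{(r)}\big)^{2H}\ge\big(\max_r v_j^{(r)}\big)^{2H}$ and the identity $\int_{(0,T)^p}h(\max_r x_r)\,dx=p\int_0^T h(s)s^{p-1}ds$ only makes sense for $p\in\N$. The lemma is stated for arbitrary real $p\ge 1$, and this generality is essential: in Step 3 of the proof of Theorem \ref{thm2} it is invoked with $p=\frac{H_1+H_2}{H_2}$ and $p=\frac{H_1+H_2}{H_1}$, which are not integers in general. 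As written, your argument does not prove the statement that is actually needed.

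The fix is to move the exponent $p$ down to the one-dimensional level \emph{before} raising anything to a power, which is what the paper does: since the integrand on the left factorizes in the increment variables and $[0,T]^m_<$ maps into the cube $(0,T)^m$ of increments, one has $\int_{[0,T]^m_<}\le\prod_{j=1}^m\int_0^T e^{-\frac{\kappa}{2}a_j v^{2H}}dv$, hence the $p$-th power of the simplex integral is at most $\prod_j\big(\int_0^T e^{-\frac{\kappa}{2}a_j v^{2H}}dv\big)^p$ for any real $p\ge1$. Each scalar factor is then controlled by the inequality $\big(\int_0^T e^{-a v^{2H}}dv\big)^p\le C_{T,m,H,p}\int_0^{T/m}e^{-a v^{2H/p}}dv$, proved by exactly your soft continuity-plus-limits argument applied to $f_T(a)=\big(\int_0^T e^{-av^{2H}}dv\big)^p\big/\int_0^{T/m}e^{-av^{2H/p}}dv$ (no weight $s^{p-1}$ is needed), and the resulting product over $[0,T/m]^m$ is an integral over a cube of increments contained in the simplex, giving the right-hand side. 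So your scheme survives, but only after you abandon the $p$-fold product representation; everything else you wrote is a more laborious route to the same two ingredients the paper uses.
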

\begin{proof}
For any $a\geq 0$ and $T>0$, let $f_T(a)=\frac{\left(\int^T_0 e^{-a\, v^{2H}}\, dv\right)^p}{ \int^{T/m}_0 e^{-a\, v^{\frac{2H}{p}}}\, dv}$. Clearly, $f_T(\cdot)$ is continuous on $[0,\infty)$, $\lim\limits_{a\to\infty}f_T(a)=\frac{\Gamma^p(\frac{1}{2H})}{p(2H)^{p-1}\Gamma(\frac{p}{2H})}$ and $f_T(0)=mT^{p-1}$. Therefore, there exists a positive constant $C_{T, m, H,p}$ depending only on $T$, $m$, $H$ and $p$ such that 
\[
\left(\int^T_0 e^{-a\, v^{2H}}\, dv\right)^p\leq C_{T, m, H,p}\int^{T/m}_0 e^{-a\, v^{\frac{2H}{p}}}\, dv,\quad \text{for all}\, a\geq 0.
\]

Using local nondeterminism property (\ref{ln}) and the above inequality, 
\begin{align*}
&\bigg(\int_{[0,T]^m_<} \exp\Big(-\frac{1}{2}\Var\big(\sum\limits^m_{j=1}y_j \cdot X^{H}_{u_i}\big)\Big)\, du\bigg)^p\\
&\qquad \leq \bigg(\int_{[0,T]^m_<} \exp\Big(-\frac{\kappa_{T,m,H}}{2}\sum\limits^m_{j=1}\Big|\sum\limits^m_{i=j}y_i\Big|^2 (\Delta u_j)^{2H}\Big)\, du\bigg)^p\\
&\qquad \leq \prod\limits^m_{j=1} \bigg(\int^T_0 \exp\bigg(-\frac{\kappa_{T,m,H}}{2}\Big|\sum\limits^m_{i=j}y_i\Big|^2 v^{2H}\bigg) \, dv\bigg)^p\\
&\qquad \leq C^m_{T,m,H,p} \prod\limits^m_{j=1} \int^{T/m}_0 \exp\bigg(-\frac{\kappa_{T,m,H}}{2}\Big|\sum\limits^m_{i=j}y_i\Big|^2 v^{\frac{2H}{p}}\bigg)\, dv\\
&\qquad \leq C^m_{T,m,H,p}  \int_{[0,T]^m_<} \exp\bigg(-\frac{\kappa_{T,m,H}}{2}\sum\limits^m_{j=1}\Big|\sum\limits^m_{i=j}y_i\Big|^2 (\Delta u_j)^{\frac{2H}{p}}\bigg)\, du.
\end{align*}
This completes the proof.
\end{proof}

\bigskip

\section{Proofs of main results.}

In this section, we will give proofs of Theorems \ref{thm1} and \ref{thm2}.

\subsection{Proof of Theorem \ref{thm1}}

\begin{proof}
Recall the definition of $L^{(\mathbf{k})}_{\varepsilon}(T,x)$ in (\ref{epsilon}). Using Fourier transform, 
\begin{align*}
L^{(\mathbf{k})}_{\varepsilon}(T,x)
&=\frac{\iota^{|\mathbf{k}|}}{(2\pi)^d} \int^T_0\int^T_0\int_{\R^d}  \prod^d_{i=1}y^{k_i}_i\, e^{\iota y\cdot (X^{H_1}_u-\widetilde{X}^{H_2}_v-x)}e^{-\frac{\varepsilon|y|^2}{2}} \, dy\, du\, dv.
\end{align*}
Fix an even integer $m\geq 1$ and denote $T_m=[0,T]^{2m}$. We have 
\begin{align*} \label{e2}
\E\Big[ |L^{(\mathbf{k})}_{\varepsilon}(T,x)|^m\Big]   \nonumber
&\leq \frac{1}{(2\pi)^{md}}\int_{T_m}\int_{\R^{md}} \exp\bigg\{-\frac{1}{2} \E\Big[\sum^m_{j=1} y_j \cdot(X^{H_1}_{s_j}-\widetilde{X}^{H_2}_{t_j})\Big]^2 \bigg\}\\  
&\qquad\qquad \times \exp\Big\{-\frac{\varepsilon}{2}\sum^m_{j=1}|y_j|^2\Big\} \prod^m_{j=1} \Big(\prod^d_{i=1}| y_{j,i}|^{k_i}\Big)\, dy\, dt\, ds.
\end{align*}
Let $\mathscr{P}_m$ be the set  of all permutations of $\{1,2,\dots, m\}$ and 
\[
D^{m}_T=\big\{ u\in [0, T]^m: 0<u_1<\dots <u_m<T\big\}.
\] 
Then
\begin{align*}
&\E\Big[ |L^{(\mathbf{k})}_{\varepsilon}(T,x)|^m\Big] \\
&\leq \frac{m!}{(2\pi)^{md}}\sum_{\sigma\in \mathscr{P}_m}\int_{D^{m}_T\times D^{m}_T}\int_{\R^{md}} \exp\bigg\{-\frac{1}{2} \E\Big[\sum^m_{j=1} y_j \cdot (X^{H_1}_{s_j}-\widetilde{X}^{H_2}_{t_{\sigma(j)}})\Big]^2 \bigg\} \prod^m_{j=1} |y_{j}|^{|\mathbf{k}|}\, dy\, dt\, ds.
\end{align*}

Using H\"{o}lder inequality,
\begin{align*} 
&\int_{\R^{md}} \exp\bigg\{-\frac{1}{2} \E\Big[\sum^m_{j=1} y_j \cdot(X^{H_1}_{s_j}-\widetilde{X}^{H_2}_{t_{\sigma(j)}})\Big]^2 \bigg\} \prod^m_{j=1}|y_{j}|^{|\mathbf{k}|}\, dy \nonumber \\
&\qquad \leq \bigg(\int_{\R^{md}} \exp\Big\{-\frac{H_1+H_2}{2H_2} \Var\big(\sum^m_{j=1} y_j \cdot X^{H_1}_{s_j}\big) \Big\} \prod^m_{j=1} |y_{j}|^{|\mathbf{k}|}\, dy\bigg)^{\frac{H_2}{H_1+H_2}} \nonumber \\
&\qquad\qquad \times \bigg(\int_{\R^{md}} \exp\Big\{-\frac{H_1+H_2}{2H_1} \Var\big(\sum^m_{j=1} y_j \cdot X^{H_2}_{t_{\sigma(j)}}\big) \Big\} \prod^m_{j=1}|y_{j}|^{|\mathbf{k}|}\, dy\bigg)^{\frac{H_1}{H_1+H_2}} \nonumber \\
&\qquad\leq \bigg(\int_{\R^{md}} \exp\Big\{-\frac{1}{2} \Var\big(\sum^m_{j=1} y_j \cdot X^{H_1}_{s_j}\big) \Big\} \prod^m_{j=1} |y_{j}|^{|\mathbf{k}|}\, dy\bigg)^{\frac{H_2}{H_1+H_2}} \nonumber \\
&\qquad\qquad \times \bigg(\int_{\R^{md}} \exp\Big\{-\frac{1}{2} \Var\big(\sum^m_{j=1} y_j \cdot X^{H_2}_{t_{\sigma(j)}}\big) \Big\} \prod^m_{j=1} |y_{j}|^{|\mathbf{k}|}\, dy\bigg)^{\frac{H_1}{H_1+H_2}}.
\end{align*}

Now by Lemma \ref{lma1} and the inequality $\big(\sum\limits^n_{i=1} a_i\big)^{\alpha}\leq \sum\limits^n_{i=1} a_i^{\alpha}$
for any $n\in\N$, $a_i\geq 0$ and $\alpha\in[0,1]$, we have
\begin{align}  \label{holder}
\E\Big[ |L^{(\mathbf{k})}_{\varepsilon}(T,x)|^m\Big]
&\leq c_1 \bigg( \sum_{\mathcal{S}} \int_{D^{m}_T}  \prod^{m}_{j=1}(s_j-s_{j-1})^{-\frac{H_1H_2}{H_1+H_2}d-\frac{H_1H_2}{H_1+H_2}|\mathbf{k}|(p_{j}+\overline{p}_{j-1})}ds\bigg)^2,
\end{align}
where $\mathcal{S}=\big\{p_{j}, \overline{p}_{j}:  p_{j}\in \{0,1\},\,  p_{j}+\overline{p}_{j}=1,\, j=1,\dots,m-1, p_{m}=1\big\}$.

Note that $p_{j}+\overline{p}_{j-1}\in\{0,1,2\}$ for $j=1,\dots,m$. Hence, when $\frac{H_1H_2}{H_1+H_2}(d+2|\mathbf{k}|)<1$,
\begin{align*} \label{m}
\E\Big[ |L^{(\mathbf{k})}_{\varepsilon}(T,x)|^m\Big]<\infty
\end{align*}
for all $\varepsilon>0$.

Observe that
\begin{align*}
&\E\Big[ |L^{(\mathbf{k})}_{\varepsilon}(T,x)-L^{(\mathbf{k})}_{\eta}(T,x)|^m\Big]\\
&\leq \frac{1}{(2\pi)^{md}}\int_{T_m}\int_{\R^{md}} \exp\bigg\{-\frac{1}{2} \E\Big[\sum^m_{j=1} y_j \cdot(X^{H_1}_{s_j}-\widetilde{X}^{H_2}_{t_j})\Big]^2 \bigg\}\\  
&\qquad\qquad \times \prod^m_{j=1}\bigg|\exp\Big\{-\frac{\varepsilon}{2}|y_j|^2\Big\}-\exp\Big\{-\frac{\eta}{2}|y_j|^2\Big\}\bigg| \prod^m_{j=1} \Big(\prod^d_{i=1}| y_{j,i}|^{k_i}\Big)\, dy\, dt\, ds
\end{align*} 
and 
\begin{align*}
\int_{T_m}\int_{\R^{md}} \exp\bigg\{-\frac{1}{2} \E\Big[\sum^m_{j=1} y_j \cdot(X^{H_1}_{s_j}-\widetilde{X}^{H_2}_{t_j})\Big]^2 \bigg\}\bigg| \prod^m_{j=1} \Big(\prod^d_{i=1}| y_{j,i}|^{k_i}\Big)\, dy\, dt\, ds<\infty
\end{align*} 
when $\frac{H_1H_2}{H_1+H_2}(d+2|\mathbf{k}|)<1$.  Now, by the dominated convergence theorem, we can easily obtain that  the $\mathbf{k}$-th derivative of the local time $L^{(\mathbf{k})}(T,x)$ exists in $L^p$ for any $p\in[1,\infty)$ if $\frac{H_1H_2}{H_1+H_2}(d+2|\mathbf{k}|)<1$.

In the sequel, we show the H\"{o}lder continuity of $L^{(\mathbf{k})}(T,x)$ in time and space variables.  For H\"{o}lder continuity in the space variable, using Fourier transform,
\begin{align*}
&L^{(\mathbf{k})}_{\varepsilon}(T,z+x)-L^{(\mathbf{k})}_{\varepsilon}(T,x)\\
&=\frac{\iota^{|\mathbf{k}|}}{(2\pi)^d} \int^T_0\int^T_0\int_{\R^d}  \prod^d_{i=1}y^{k_i}_i\, e^{\iota y\cdot (X^{H_1}_u-\widetilde{X}^{H_2}_v)}(e^{-\iota y\cdot (z+x)}-e^{-\iota y\cdot x})e^{-\frac{\varepsilon|y|^2}{2}} \, dy\, du\, dv.
\end{align*}
Then, for any even integer $m$,
\begin{align*} 
&\E\Big[ |L^{(\mathbf{k})}_{\varepsilon}(T,z+x)-L^{(\mathbf{k})}_{\varepsilon}(T,x)|^m\Big]\\  
&\leq \int_{T_m}\int_{\R^{md}} \exp\Big\{-\frac{1}{2} \E\big[\sum^m_{j=1} y_j \cdot(X^{H_1}_{s_j}-\widetilde{X}^{H_2}_{t_j})\big]^2 \Big\}\\  
&\qquad\qquad \times \exp\Big\{-\frac{\varepsilon}{2}\sum^m_{j=1}|y_j|^2\Big\} \prod^m_{j=1} \big(|e^{-\iota y_j\cdot z}-1| |y_{j}|^{|\mathbf{k}|}\big)\, dy\, dt\, ds.
\end{align*}
Note that $|e^{-\iota y_j\cdot z}-1|\leq c_{\alpha} |z|^{\alpha} |y_{j}|^{\alpha}$ for any $\alpha\in[0,1]$. Hence
\[
\prod^m_{j=1} \Big(|e^{-\iota y_j\cdot z_j}-1| |y_{j}|^{|\mathbf{k}|}\Big)\leq  c^m_{\alpha}  |z|^{m\alpha} \prod^m_{j=1}  |y_{j}|^{|\mathbf{k}|+\alpha}.
\]
Similarly, for any $\alpha\in[0,1\wedge (\frac{1}{H_1}+\frac{1}{H_2}-2|\mathbf{k}|-d))$, 
\begin{align*} 
\E\Big[ |L^{(\mathbf{k})}_{\varepsilon}(T,z+x)-L^{(\mathbf{k})}_{\varepsilon}(T,x)|^m\Big]\leq c_2 |z|^{m\alpha}.
\end{align*}
Therefore,
\begin{align*} 
&\E\Big[ |L^{(\mathbf{k})}(T,z+x)-L^{(\mathbf{k})}(T,x)|^m\Big]\\
&\leq 3^m\lim_{\varepsilon\downarrow 0}\E\Big[ |L^{(\mathbf{k})}(T,z+x)-L^{(\mathbf{k})}_{\varepsilon}(T,z+x)|^m\Big]+3^m\limsup_{\varepsilon\downarrow 0}\E\Big[ |L^{(\mathbf{k})}_{\varepsilon}(T,z+x)-L^{(\mathbf{k})}_{\varepsilon}(T,x)|^m\Big]\\
&\qquad+3^m\lim_{\varepsilon\downarrow 0}\E\Big[ |L^{(\mathbf{k})}_{\varepsilon}(T,x)-L^{(\mathbf{k})}(T,x)|^m\Big]\\
&\leq c_3 |z|^{m\alpha}.
\end{align*}
The desired $\theta_1$-H\"{o}lder continuity in the space variable follows from the Kolmogorov's continuity criterion.

For H\"{o}lder continuity in the time variable, we see that
\begin{align*}
&L^{(\mathbf{k})}_{\varepsilon}(T+h,x)-L^{(\mathbf{k})}_{\varepsilon}(T,x)\\
&=\int^{T+h}_0\int^T_0 p^{(\mathbf{k})}_{\varepsilon}(X^{H_1}_t-\widetilde{X}^{H_2}_s-x)\, ds\, dt+\int^{T}_0\int^{T+h}_{T} p^{(\mathbf{k})}_{\varepsilon}(X^{H_1}_t-\widetilde{X}^{H_2}_s-x)\, ds\, dt\\
&\qquad\qquad+\int^{T+h}_{T}\int^{T+h}_T p^{(\mathbf{k})}_{\varepsilon}(X^{H_1}_t-\widetilde{X}^{H_2}_s-x)\, ds\, dt\\
&=:I_{\varepsilon,1}+I_{\varepsilon,2}+I_{\varepsilon,3}.
\end{align*}
It suffices to show that, for any even integer $m$, $\E|I_{\varepsilon,1}|^m$ is less than a constant multiple of $h^{m\big(1-\frac{H_1H_2}{H_1+H_2}(|\mathbf{k}|+d)\big)}$. Let $T^h_m=[T,T+h]^m\times[0,T]^m$. By Fourier transform,  
\begin{align*}  
\E|I_{\varepsilon,1}|^m
&\leq \frac{1}{(2\pi)^{md}}\int_{T^h_m}\int_{\R^{md}} \exp\bigg\{-\frac{1}{2} \E\Big[\sum^m_{j=1} y_j \cdot(X^{H_1}_{s_j}-\widetilde{X}^{H_2}_{t_j})\Big]^2 \bigg\}\\  
&\qquad\qquad \times \exp\Big\{-\frac{\varepsilon}{2}\sum^m_{j=1}|y_j|^2\Big\} \prod^m_{j=1} |y_{j}|^{|\mathbf{k}|}\, dy\, dt\, ds.
\end{align*}
Let $D^{m}_{T,h}=\big\{ u\in [T, T+h]^m: T<u_1<\dots <u_m<T+h\big\}$. Now using H\"{o}lder inequality as in obtaining (\ref{holder}) and then Lemmas \ref{a1} and \ref{a2},
\begin{align*}  
\E|I_{\varepsilon,1}|^m
&\leq c_3  \sum_{\mathcal{S}} \int_{D^{m}_{T,h}}  \prod^{m}_{j=1}(s_j-s_{j-1})^{-\frac{H_1H_2}{H_1+H_2}d-\frac{H_1H_2}{H_1+H_2}|\mathbf{k}|(p_{j}+\overline{p}_{j-1})}ds\\
&\leq c_4 \sum_{\mathcal{S}} h^{\sum\limits^m_{j=1}\Big[1-\frac{H_1H_2}{H_1+H_2}d-\frac{H_1H_2}{H_1+H_2}|\mathbf{k}|(p_{j}+\overline{p}_{j-1})\Big]}\\
&\leq c_5\, h^{m\big(1-\frac{H_1H_2}{H_1+H_2}(|\mathbf{k}|+d)\big)}.
\end{align*}
Therefore, 
\begin{align*}
&\E\Big[\big|L^{(\mathbf{k})}(T+h,x)-L^{(\mathbf{k})}(T,x)\big|^m\Big]\\
&\leq 3^m \lim_{\varepsilon\downarrow 0} \E\Big[\big|L^{(\mathbf{k})}(T+h,x)-L^{(\mathbf{k})}_{\varepsilon}(T+h,x)\big|^m\Big]+3^m\limsup_{\varepsilon\downarrow 0} \E\Big[\big|L^{(\mathbf{k})}_{\varepsilon}(T+h,x)-L^{(\mathbf{k})}_{\varepsilon}(T,x)\big|^m\Big]\\
&\qquad+3^m\lim_{\varepsilon\downarrow 0} \E\Big[\big|L^{(\mathbf{k})}_{\varepsilon}(T,x)-L^{(\mathbf{k})}(T,x)\big|^m\Big]\\
&\leq c_6\, h^{m\big(1-\frac{H_1H_2}{H_1+H_2}(|\mathbf{k}|+d)\big)}.
\end{align*}
By Kolmogorov's continuity criterion, we get the $\theta_2$-H\"{o}lder continuity in the time variable. 

This completes the proof.
\end{proof}

\bigskip

\subsection{Proof of Theorem \ref{thm2}}

\begin{proof} We divide the proof into several steps.  

\noindent
{\bf Step 1.} Recall that 
\begin{align} \label{2m}
\E\Big[ |L^{(\mathbf{k})}_{\varepsilon}(T,0)|^2\Big]   \nonumber
&=\frac{(-1)^{|\mathbf{k}|}}{(2\pi)^{2d}}\int_{[0,T]^4}\int_{\R^{2d}}  I_2(H_1,s,x)  \widetilde{I}_2(H_2,t,x)\\
&\qquad \times\exp\Big\{-\frac{\varepsilon}{2}(|x_1|^2+|x_2|^2)\Big\}  \prod^d_{i=1}x^{k_i}_{2,i} \prod^d_{i=1}x^{k_i}_{1,i} \, dx\, dt\, ds,
\end{align}
where $I_2(H,t,x)=\exp\Big\{-\frac{1}{2} \E\big[ x_2\cdot X^{H}_{t_2}+x_1\cdot X^{H}_{t_1}\big]^2\Big\}$ and 
\begin{align*}
\widetilde{I}_2(H,t,x)=\exp\Big\{-\frac{1}{2} \E\big[ x_2\cdot \widetilde{X}^{H}_{t_2}+x_1\cdot \widetilde{X}^{H}_{t_1}\big]^2\Big\}.
\end{align*}
We define $F^{(\mathbf{k})}_{T,\varepsilon}$ by replacing $\prod\limits^d_{i=1}x^{k_i}_{1,i}$ in (\ref{2m}) with $\prod\limits^d_{i=1}(-x_{2,i})^{k_i}$.  That is,
\begin{align*} \label{2m0}
F^{(\mathbf{k})}_{T,\varepsilon}  \nonumber
&=\frac{1}{(2\pi)^{2d}}\int_{[0,T]^4}\int_{\R^{2d}}  I_2(H_1,s,x)  \widetilde{I}_2(H_2,t,x)\\
&\qquad\qquad \times\exp\Big\{-\frac{\varepsilon}{2}(|x_1|^2+|x_2|^2)\Big\}  \prod^d_{i=1}x^{2k_i}_{2,i}\, dx\, dt\, ds.
\end{align*}

Note that  
\[
\min\Big\{I_2(H,t,y),\widetilde{I}_2(H,t,y)\Big\}\geq \exp\Big\{-c_1 \big(|x_2|^2t_2^{2H}+|x_1|^2t^{2H}_1\big)\Big\}.
\]
Hence,
\begin{align*}
F^{(\mathbf{k})}_{T,\varepsilon}
&\geq c_2\, \int_{[0,T]^4}\int_{\R^{2d}}  \exp\Big\{-|x_2|^2(t^{2H_2}_2+s^{2H_1}_2+\varepsilon)-|x_1|^2(t^{2H_2}_1+s^{2H_1}_1+\varepsilon)\Big\}\prod^d_{i=1}x^{2k_i}_{2,i} dx\, dt\, ds\\
&=c_3\, \int_{[0,T]^4}   (t^{2H_2}_2+s^{2H_1}_2+\varepsilon)^{-\frac{d}{2}} (t^{2H_2}_1+s^{2H_1}_1+\varepsilon)^{-\frac{d}{2}-|\mathbf{k}|}\, dt\, ds.
\end{align*}

By Lemma \ref{a3}, 
\begin{align} \label{fkt}
F^{(\mathbf{k})}_{T,\varepsilon}\geq 
\left\{\begin{array}{ll} 
c_4\,\varepsilon^{\frac{H_1+H_2}{H_1H_2}-d}&   \text{if}\; \frac{H_1H_2}{H_1+H_2}d>1,  |\mathbf{k}|=0\\  
c_4\,\ln^2(1+\varepsilon^{-\frac{1}{2}}) & \text{if}\; \frac{H_1H_2}{H_1+H_2}d=1, |\mathbf{k}|=0 \\
c_4\,\ln(1+\varepsilon^{-\frac{1}{2}})\varepsilon^{\frac{H_1+H_2}{2H_1H_2}-\frac{d}{2}-|\mathbf{k}|} &   \text{if}\; \frac{H_1H_2}{H_1+H_2}d=1, \frac{H_1H_2}{H_1+H_2}(2|\mathbf{k}|+d)>1\\
c_4\,\varepsilon^{\frac{H_1+H_2}{2H_1H_2}-\frac{d}{2}-|\mathbf{k}|} &   \text{if}\; \frac{H_1H_2}{H_1+H_2}d<1,  \frac{H_1H_2}{H_1+H_2}(2|\mathbf{k}|+d)>1\\  
c_4\,\ln(1+\varepsilon^{-\frac{1}{2}}) &   \text{if}\;  \frac{H_1H_2}{H_1+H_2}d<1, \frac{H_1H_2}{H_1+H_2}(2|\mathbf{k}|+d)=1. 
\end{array} \right.
\end{align}

\noindent
{\bf Step 2.} We estimate $\Big|\E\Big[ |L^{(\mathbf{k})}_{\varepsilon}(T,0)|^2\Big]-F^{(\mathbf{k})}_{T,\varepsilon}\Big|$. It is easy to see that $\Big|\E\Big[ |L^{(\mathbf{0})}_{\varepsilon}(T,0)|^2\Big]-F^{(\mathbf{0})}_{T,\varepsilon}\Big|=0$. So it suffices to consider the case $|\mathbf{k}|\geq 1$ in the sequal. Recall that $\mathscr{P}_2$ is the set of all permutations of $\{1,2\}$. That is, 
\[
\mathscr{P}_2=\Big\{\sigma_1,\sigma_2: \sigma_1(1)=\sigma_2(2)=1, \sigma_1(2)=\sigma_2(1)=2\Big\}.
\]

For $\sigma\in\mathscr{P}_2$, define
\begin{align*}
\widetilde{J}^{\sigma}_2(H,t,y)=\exp\Big\{-\frac{1}{2} \text{Var}\big[ (y_{\sigma(2)}-y_{\sigma(2)+1})\cdot (\widetilde{X}^{H}_{t_2}-\widetilde{X}^{H}_{t_1})+y_{1}\cdot \widetilde{X}^{H}_{t_1}\big]\Big\}
\end{align*}
with the convention $y_3=0$ and 
\begin{align*}
J_2(H,t,y)=\exp\Big\{-\frac{1}{2} \text{Var}\big[ y_2\cdot (X^{H}_{t_2}-X^{H}_{t_1})+y_1\cdot X^{H}_{t_1}\big]\Big\}.
\end{align*}

Making the change of variables $y_2=x_2$ and $y_1=x_2+x_1$ gives 
\begin{align*}
&\E\Big[ |L^{(\mathbf{k})}_{\varepsilon}(T,0)|^2\Big]-F^{(\mathbf{k})}_{T,\varepsilon}\\
&=2\frac{(-1)^{|\mathbf{k}|}}{(2\pi)^{2d}}\sum_{\sigma\in\mathscr{P}_2}\int_{D^2_T\times D^2_T}\int_{\R^{2d}}  J_2(H_1,s,y)  \widetilde{J}^{\sigma}_2(H_2,t,y)\exp\Big\{-\frac{\varepsilon}{2}\big(|y_1-y_2|^2+|y_2|^2\big)\Big\} \\
&\qquad \times \prod^d_{i=1}y^{k_i}_{2,i} \Big(\prod^d_{i=1}(y_{1,i}-y_{2,i})^{k_i}- \prod^d_{i=1}(-y_{2,i})^{k_i}\Big) \, dy\, dt\, ds,
\end{align*}
where $D^2_T=\big\{0<t_1<t_2<T\big\}$.

Set 
\[
\Delta^{\mathbf{k}, d}(y)=\prod^d_{i=1}y^{k_i}_{2,i}\Big(\prod^d_{i=1}(y_{1,i}-y_{2,i})^{k_i}-\prod^d_{i=1}(-y_{2,i})^{k_i}\Big).
\]
It is easy to see that 
\[
|\Delta^{\mathbf{k}, d}(y)|\leq c_5\sum^{|\mathbf{k}|}_{\ell=1} |y_2|^{2|\mathbf{k}|-\ell}|y_1|^{\ell}.
\]

For $\ell=1,\dots, |\mathbf{k}|$, define 
\[
F^{\ell}(y)=\exp\Big\{-\frac{\varepsilon}{2}(|y_1-y_2|^2+|y_2|^2)\Big\} |y_2|^{2|\mathbf{k}|-\ell}|y_1|^{\ell}.
\] 
Then,
\begin{align} \label{dva}
\Big|\E\Big[ |L^{(\mathbf{k})}_{\varepsilon}(T,0)|^2\Big]-F^{(\mathbf{k})}_{T,\varepsilon}\Big|\leq  c_5\sum^{|\mathbf{k}|}_{\ell=1} (K^{\ell}_{\varepsilon,1}+K^{\ell}_{\varepsilon, 2}),
\end{align}
where 
\begin{align*}
K^{\ell}_{\varepsilon,i}
&=\int_{D^2_T\times D^2_T}\int_{\R^{2d}} J_2(H_1,s,y)  \widetilde{J}^{\sigma_i}_2(H_2,t,y)F^{\ell}(y)\, dy\, dt\, ds, \quad i=1,2.
\end{align*}

\noindent
{\bf Step 3.}  We estimate $K^{\ell}_{\varepsilon,1}$ and $K^{\ell}_{\varepsilon,2}$ for $\ell=1,\dots, |\mathbf{k}|$. Using the local nondeterminism property (\ref{ln}) and Lemma \ref{a4},
\begin{align*}
K^{\ell}_{\varepsilon,1}
&\leq c_6\int_{D^2_T\times D^2_T}\int_{\R^{2d}} \exp\Big\{-|y_2|^2\{(t_2-t_1)^{2H_2}+(s_2-s_1)^{2H_1}+\varepsilon\} \Big\}\\
&\qquad\qquad \times \exp\Big\{-|y_1|^2\{t_1^{2H_2}+s_1^{2H_1}+\varepsilon\} \Big\} |y_2|^{2|\mathbf{k}|-\ell}|y_1|^{\ell}\, dy\, dt\, ds\\
&\leq c_7 \int_{D^2_T\times D^2_T}\{(t_2-t_1)^{2H_2}+(s_2-s_1)^{2H_1}+\varepsilon\}^{-\frac{2|\mathbf{k}|-\ell+d}{2}}   \{t_1^{2H_2}+s_1^{2H_1}+\varepsilon\}^{-\frac{\ell+d}{2}}\, dt\, ds\\
&\leq c_7 \int_{[0,T]^2}(u^{2H_2}+v^{2H_1}+\varepsilon)^{-\frac{2|\mathbf{k}|-\ell+d}{2}}\, du\,dv  \int_{[0,T]^2} (u^{2H_2}+v^{2H_1}+\varepsilon)^{-\frac{\ell+d}{2}}\, du\, dv.
\end{align*}

For $\ell=1,\dots, |\mathbf{k}|$, by Lemma \ref{a3},  we could obtain that 
\begin{align}   \label{kl1}
K^{\ell}_{\varepsilon,1}\leq c_8\, h^{d,|\mathbf{k}|,1}_{H_1,H_2}(\varepsilon),
\end{align}
where 
\begin{align}   \label{h1}
h^{d,|\mathbf{k}|,1}_{H_1,H_2}(\varepsilon)= 
\left\{\begin{array}{ll} 
\varepsilon^{\frac{H_1+H_2}{2H_1H_2}-\frac{d}{2}-|\mathbf{k}|} &   \text{if}\; \frac{H_1H_2}{H_1+H_2}d=1, \frac{H_1H_2}{H_1+H_2}(2|\mathbf{k}|+d)>1\\
\varepsilon^{\frac{H_1+H_2}{2H_1H_2}-\frac{d}{2}-|\mathbf{k}|+\beta} &   \text{if}\; \frac{H_1H_2}{H_1+H_2}d<1,  \frac{H_1H_2}{H_1+H_2}(2|\mathbf{k}|+d)>1\\  
1  &   \text{if}\;  \frac{H_1H_2}{H_1+H_2}d<1, \frac{H_1H_2}{H_1+H_2}(2|\mathbf{k}|+d)=1,
\end{array} \right.
\end{align}
where $\beta=\frac{1}{4}\big\{1\wedge (\frac{H_1+H_2}{H_1H_2}-d)\wedge (d+2|\mathbf{k}|-\frac{H_1+H_2}{H_1H_2})\big\}$.

We next estimate $K^{\ell}_{\varepsilon,2}$. Using H\"{o}lder inequality,
\begin{align*} 
K^{\ell}_{\varepsilon,2} \nonumber
&\leq \bigg(\int_{\R^{2d}} F^{\ell}(y)  \Big(\int_{D^2_T} J_2(H_1,s,y) ds\Big)^{\frac{H_1+H_2}{H_2}} dy\bigg)^{\frac{H_2}{H_1+H_2}} \\
&\qquad\qquad  \times \bigg(\int_{\R^{2d}} F^{\ell}(y)\Big(\int_{D^2_T} \widetilde{J}^{\sigma_2}_2(H_2,t,y)\, dt\Big)^{\frac{H_1+H_2}{H_1}} dy\bigg)^{\frac{H_1}{H_1+H_2}}.
\end{align*}

Then, by Lemmas \ref{lma2} and \ref{a4},
\begin{align*}
&\int_{\R^{2d}} F^{\ell}(y)  \Big(\int_{D^2_T} J_2(H_1,s,y) ds\Big)^{\frac{H_1+H_2}{H_2}} dy\\
&\leq \int_{\R^{2d}} \int_{D^2_T} \exp\Big\{-c_9\big(|y_2|^2\{(s_2-s_1)^{\frac{2H_1H_2}{H_1+H_2}}+\varepsilon\}+|y_1|^2 \{s_1^{\frac{2H_1H_2}{H_1+H_2}}+\varepsilon\}\big)\Big\}|y_2|^{2|\mathbf{k}|-\ell}|y_1|^{\ell}\, ds\, dy\\
&\leq c_{10}\int^T_0 (u^{\frac{2H_1H_2}{H_1+H_2}}+\varepsilon)^{-\frac{2|\mathbf{k}|-\ell+d}{2}} du  \int^T_0 (u^{\frac{2H_1H_2}{H_1+H_2}}+\varepsilon)^{-\frac{\ell+d}{2}} du\\
&\leq c_{11}\, h^{d,|\mathbf{k}|,1}_{H_1,H_2}(\varepsilon).
\end{align*}

Making the change of variables gives $y_1-y_2=x_2$ and $y_1=x_1$ gives
\begin{align*}
&\int_{\R^{2d}} F^{\ell}(y)\Big(\int_{D^2_T} \widetilde{J}^{\sigma_2}_2(H_2,t,y)\, dt\Big)^{\frac{H_1+H_2}{H_1}} dy\\
&=\int_{\R^{2d}} \exp\Big\{-\frac{\varepsilon}{2}(|x_1-x_2|^2+|x_2|^2)\Big\} |x_1-x_2|^{2|\mathbf{k}|-\ell}|x_1|^{\ell}  \Big(\int_{D^2_T} \widetilde{J}^{\sigma_1}_2(H_2,t,x)\, dt\Big)^{\frac{H_1+H_2}{H_1}} dx\\
&\leq \, \int_{\R^{2d}}\int_{D^2_T}  \exp\Big\{-c_{12}\big(|x_2|^2\{(t_2-t_1)^{\frac{2H_1H_2}{H_1+H_2}}+\varepsilon\}+|x_1|^2\{ t_1^{\frac{2H_1H_2}{H_1+H_2}}+\varepsilon\}\big)\Big\}\\
&\qquad\qquad\qquad \times \sum^{2|\mathbf{k}|-\ell}_{j=0}|x_1|^{j+\ell} |x_2|^{2|\mathbf{k}|-\ell-j}   \,dt\, dx\\
&\leq  c_{13} \sum^{2|\mathbf{k}|-\ell}_{j=0}  \int^T_0 (u^{\frac{2H_1H_2}{H_1+H_2}}+\varepsilon)^{-\frac{2|\mathbf{k}|-\ell-j+d}{2}} du  \int^T_0 (u^{\frac{2H_1H_2}{H_1+H_2}}+\varepsilon)^{-\frac{\ell+j+d}{2}} du\\
&\leq  c_{14}\,  h^{d,|\mathbf{k}|,2}_{H_1,H_2}(\varepsilon),
\end{align*}
where in the last inequality we used Lemma \ref{a3} and  
\begin{align} \label{h2}
h^{d,|\mathbf{k}|,2}_{H_1,H_2}(\varepsilon)= 
\left\{\begin{array}{ll} 
\ln(1+\varepsilon^{-\frac{1}{2}}) \varepsilon^{\frac{H_1+H_2}{2H_1H_2}-\frac{d}{2}-|\mathbf{k}|} &   \text{if}\; \frac{H_1H_2}{H_1+H_2}d=1, \frac{H_1H_2}{H_1+H_2}(2|\mathbf{k}|+d)>1\\
\varepsilon^{\frac{H_1+H_2}{2H_1H_2}-\frac{d}{2}-|\mathbf{k}|} &   \text{if}\; \frac{H_1H_2}{H_1+H_2}d<1,  \frac{H_1H_2}{H_1+H_2}(2|\mathbf{k}|+d)>1\\  
\ln(1+\varepsilon^{-\frac{1}{2}})  &   \text{if}\;  \frac{H_1H_2}{H_1+H_2}d<1, \frac{H_1H_2}{H_1+H_2}(2|\mathbf{k}|+d)=1. 
\end{array} \right.
\end{align}

Therefore, 
\begin{align} \label{kl2}
K^{\ell}_{\varepsilon,2} 
&\leq c_{15}\,    \big( h^{d,|\mathbf{k}|,1}_{H_1,H_2}(\varepsilon)\big)^{\frac{H_1}{H_1+H_2}} \big(h^{d,|\mathbf{k}|,2}_{H_1,H_2}(\varepsilon)\big)^{\frac{H_2}{H_1+H_2}}.
\end{align}

\noindent
{\bf Step 4.}  We show the divergence of $\E\Big[ |L^{(\mathbf{k})}_{\varepsilon}(T,0)|^2\Big]$ as $\varepsilon$ tends to $0$.  Combining inequalities (\ref{dva}), (\ref{kl1}) and (\ref{kl2}) gives 
\begin{align*}
\Big| \E\Big[ |L^{(\mathbf{k})}_{\varepsilon}(T,0)|^2\Big]-F^{(\mathbf{k})}_{T,\varepsilon}\Big|
&\leq  c_{16}\, 1_{\N}(|\mathbf{k}|) \big( h^{d,|\mathbf{k}|,1}_{H_1,H_2}(\varepsilon)\big)^{\frac{H_1}{H_1+H_2}} \big(h^{d,|\mathbf{k}|,2}_{H_1,H_2}(\varepsilon)\big)^{\frac{H_2}{H_1+H_2}}.
\end{align*}
Recall the inequality (\ref{fkt}), definition of $h^{d,|\mathbf{k}|,1}_{H_1,H_2}(\varepsilon)$ in (\ref{h1}) and definition of $h^{d,|\mathbf{k}|,2}_{H_1,H_2}(\varepsilon)$ in (\ref{h2}). We finally have 
\begin{align*}
\E\big[ |L^{(\mathbf{k})}_{\varepsilon}(T,0)|^2\big] \geq 
\left\{\begin{array}{ll} 
c_{17}\,\varepsilon^{\frac{H_1+H_2}{H_1H_2}-d}&   \text{if}\; \frac{H_1H_2}{H_1+H_2}d>1, |\mathbf{k}|=0\\  
c_{17}\,\ln^2(1+\varepsilon^{-\frac{1}{2}}) & \text{if}\; \frac{H_1H_2}{H_1+H_2}d=1, |\mathbf{k}|=0 \\
c_{17}\,\ln(1+\varepsilon^{-\frac{1}{2}})\varepsilon^{\frac{H_1+H_2}{2H_1H_2}-\frac{d}{2}-|\mathbf{k}|} &   \text{if}\; \frac{H_1H_2}{H_1+H_2}d=1, \frac{H_1H_2}{H_1+H_2}(2|\mathbf{k}|+d)>1\\
c_{17}\,\varepsilon^{\frac{H_1+H_2}{2H_1H_2}-\frac{d}{2}-|\mathbf{k}|} &   \text{if}\; \frac{H_1H_2}{H_1+H_2}d<1,  \frac{H_1H_2}{H_1+H_2}(2|\mathbf{k}|+d)>1\\  
c_{17}\,\ln(1+\varepsilon^{-\frac{1}{2}}) &   \text{if}\;  \frac{H_1H_2}{H_1+H_2}d<1, \frac{H_1H_2}{H_1+H_2}(2|\mathbf{k}|+d)=1. 
\end{array} \right.
\end{align*}
This completes the proof.

\end{proof}

\bigskip

\section{Appendix}

In this section, we give some known results that are used in this paper.

\begin{lemma} \label{a1} For any $T>0$ and $a_i\in(0,1)$ with $i=1,2,\cdots,m$, 
\begin{align*}
\int_{D^m_T} \prod^{m}_{j=1} u_j^{-a_i}\, du=\frac{\prod^m_{j=1}\Gamma(1-a_j)}{\Gamma(m+1-\sum^{m}_{i=1}a_i)}T^{\sum\limits^{m}_{i=1}(1-a_i)},
\end{align*}
where $D^m_{T}=\Big\{0<u_1+u_2+\cdots+u_m<T:\, u_i\geq 0,\, i=1,2,\cdots,m\Big\}$.
\end{lemma}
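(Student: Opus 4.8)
The plan is to recognize the left-hand side as the classical Dirichlet integral over the simplex and to reduce the computation to a single one-dimensional Gamma integral by a Laplace-transform argument. Note first that the integrand is $\prod_{j=1}^m u_j^{-a_j}$ (the subscript on the exponent is meant to match the product index), which is integrable on $D^m_T$ precisely because each $a_j\in(0,1)$ renders $u_j^{-a_j}$ locally integrable at the origin, so all integrals below converge.

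First I would exploit the scaling structure of the simplex. Writing $\Phi(T)=\int_{D^m_T}\prod_{j=1}^m u_j^{-a_j}\,du$ and substituting $u_i=Tv_i$, so that $du=T^m\,dv$ and $\prod_j u_j^{-a_j}=T^{-\sum_j a_j}\prod_j v_j^{-a_j}$, gives
\[
\Phi(T)=T^{\,m-\sum_{j=1}^m a_j}\,\Phi(1)=T^{\,\sum_{j=1}^m(1-a_j)}\,\Phi(1).
\]
Thus the whole problem reduces to identifying the constant $C:=\Phi(1)$, and the power of $T$ already matches the claimed exponent $\sum_{i=1}^m(1-a_i)$.

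To pin down $C$ I would integrate the same integrand against $e^{-\lambda\sum_j u_j}$ over the full positive orthant $(0,\infty)^m$ and evaluate the result in two ways. On the one hand the integral factorizes, and each one-dimensional factor is a Gamma integral,
\[
\int_{(0,\infty)^m} e^{-\lambda\sum_j u_j}\prod_{j=1}^m u_j^{-a_j}\,du=\prod_{j=1}^m\int_0^\infty e^{-\lambda u}u^{-a_j}\,du=\lambda^{-\sum_j(1-a_j)}\prod_{j=1}^m\Gamma(1-a_j).
\]
On the other hand, layering over the value $s=\sum_j u_j$ and using $\Phi'(s)=C\big(\sum_j(1-a_j)\big)s^{\sum_j(1-a_j)-1}$ from the scaling identity, the same quantity equals
\[
\int_0^\infty e^{-\lambda s}\Phi'(s)\,ds=C\Big(\sum_{j=1}^m(1-a_j)\Big)\Gamma\Big(\sum_{j=1}^m(1-a_j)\Big)\lambda^{-\sum_j(1-a_j)}.
\]
Matching the two expressions and applying the functional equation $x\Gamma(x)=\Gamma(x+1)$ with $x=\sum_j(1-a_j)=m-\sum_j a_j$ yields $C=\prod_{j=1}^m\Gamma(1-a_j)/\Gamma(m+1-\sum_j a_j)$, which is exactly the claimed constant.

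The only genuinely delicate point is justifying the layering identity that rewrites the orthant integral as $\int_0^\infty e^{-\lambda s}\Phi'(s)\,ds$; since the integrand is nonnegative this is a consequence of Tonelli's theorem together with the change of variables introducing $s=\sum_j u_j$ as a new coordinate, and the differentiability of $\Phi$ is immediate from the explicit power law. An entirely elementary alternative, avoiding the Laplace transform, is induction on $m$: integrating out $u_m$ over $\big(0,T-\sum_{j<m}u_j\big)$ produces a factor $(T-\sum_{j<m}u_j)^{1-a_m}/(1-a_m)$, after which a Beta-integral in the remaining variables reproduces the Gamma ratio; this route is more computational but relies only on the Beta-function identity $\int_0^1 t^{p-1}(1-t)^{q-1}\,dt=\Gamma(p)\Gamma(q)/\Gamma(p+q)$.
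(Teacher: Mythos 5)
Your argument is correct, and it takes a genuinely different route from the paper. The paper proves the lemma by direct iterated integration: after scaling out $T$, it integrates the variables one at a time over the simplex, each step producing a Beta--function factor $B\bigl(1-a_j,\,j-\sum_{i<j}a_i\bigr)$, and the telescoping product of these collapses to the stated ratio of Gamma functions. You instead isolate the power of $T$ by the homogeneity substitution $u_i=Tv_i$ and then determine the remaining constant by evaluating the Dirichlet--Laplace integral $\int_{(0,\infty)^m}e^{-\lambda\sum_j u_j}\prod_j u_j^{-a_j}\,du$ in two ways (factorized into one-dimensional Gamma integrals versus layered over the level sets of $\sum_j u_j$); the identity $x\Gamma(x)=\Gamma(x+1)$ then yields the constant. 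Both computations are sound; your Laplace-transform route avoids the chain of Beta identities and makes the appearance of $\Gamma\bigl(m+1-\sum_i a_i\bigr)$ in the denominator conceptually transparent, at the modest cost of the Tonelli/coarea justification for the layering step, which you correctly flag and which is unproblematic since the integrand is nonnegative and $\Phi$ is an explicit power law. The inductive Beta-function alternative you sketch in your final sentence is essentially the paper's own proof. You are also right that the exponent $a_i$ in the displayed product is a typo for $a_j$; the paper's proof tacitly reads it that way as well.
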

\begin{proof} For $i=1,2,\cdots, m$, let $d\overline{u}_i=\prod^m_{j=i}du_j$. Then
\begin{align*}
\int_{D^m_{T}}\prod^{m}_{j=1} u_j^{-a_i}\, du
&=\frac{T^{\sum\limits^{m}_{i=1}(1-a_i)}}{1-a_1} \int_{D^{m-1}_1}(1-\sum^m_{j=2} u_j)^{1-a_1} \prod^{m}_{j=2} u_j^{-a_i}\, d\overline{u}_2\\
&=\frac{T^{\sum\limits^{m}_{i=1}(1-a_i)}}{1-a_1} \int_{D^{m-2}_1} \int^{1-\sum\limits^m_{j=3}u_j}_0 u^{-a_2}_2(1-\sum^m_{j=3} u_j-u_2)^{1-a_1}\, du_2 \prod^{m}_{j=3} u_j^{-a_i}  \, d\overline{u}_3\\
&\;\; \vdots\\
&=\frac{T^{\sum\limits^{m}_{i=1}(1-a_i)}}{1-a_1} \prod^m_{j=2}B(1-a_{j}, j-\sum^{j-1}_{i=1}a_i) \\
&=\frac{\prod^m_{j=1}\Gamma(1-a_j)}{\Gamma(\sum^{m}_{i=1}(1-a_i)+1)}T^{\sum\limits^{m}_{i=1}(1-a_i)}.
\end{align*}
\end{proof}

\begin{lemma} \label{a2} For any $T>0$ and $a_i\in(0,1)$ with $i=1,2,\cdots,m$, 
\begin{align*}
\int_{D^m_{T,h}}\prod^{m}_{j=1} u_j^{-a_i}\, du\leq \frac{\prod^m_{j=1}\Gamma(1-a_j)}{\Gamma(m+1-\sum^{m}_{i=1}a_i)}h^{\sum\limits^{m}_{i=1}(1-a_i)},
\end{align*}
where $D^m_{T,h}=\Big\{T<u_1, T<u_1+u_2+\cdots+u_m<T+h:\, u_i\geq 0,\, i=2,\cdots,m\Big\}$.
\end{lemma}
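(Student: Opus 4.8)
The plan is to deduce this lemma directly from Lemma \ref{a1} by translating the first coordinate and using monotonicity of the singular factor. First I would substitute $w=u_1-T$, a shift of unit Jacobian, so that $du_1=dw$. Under this substitution the defining constraints of $D^m_{T,h}$ simplify: the requirement $u_1>T$ becomes $w>0$, and since $u_1>T$ together with $u_2,\dots,u_m\geq 0$ already forces $u_1+\cdots+u_m>T$, the lower constraint $T<\sum_i u_i$ is automatically satisfied and drops out. The upper constraint $\sum_i u_i<T+h$ then turns into $w+u_2+\cdots+u_m<h$. Hence the new region is exactly the simplex $D^m_h$ of Lemma \ref{a1}, with $T$ replaced by $h$ and with $w$ playing the role of the first variable.

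Next I would bound the integrand. Only the factor attached to the first coordinate is affected by the shift, becoming $(T+w)^{-a_1}$. Since $a_1\in(0,1)$ and $T>0$, the map $x\mapsto x^{-a_1}$ is decreasing, so $(T+w)^{-a_1}\leq w^{-a_1}$, while the factors $\prod_{j=2}^m u_j^{-a_j}$ are unchanged. This yields
\[
\int_{D^m_{T,h}} \prod_{j=1}^m u_j^{-a_j}\, du \;\leq\; \int_{D^m_h} w^{-a_1}\prod_{j=2}^m u_j^{-a_j}\, dw\, du_2\cdots du_m .
\]

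Finally I would invoke Lemma \ref{a1} on the right-hand side, with exponents $a_1,a_2,\dots,a_m$ and with $h$ in place of $T$; this evaluates the integral to $\frac{\prod_{j=1}^m\Gamma(1-a_j)}{\Gamma(m+1-\sum_{i=1}^m a_i)}\,h^{\sum_{i=1}^m(1-a_i)}$, which is precisely the claimed bound.

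There is no deep obstacle here, as the statement is a shifted, one-sided analogue of Lemma \ref{a1}. The only two points that require care are recognizing that the lower-sum constraint $T<\sum_i u_i$ is redundant once $u_1>T$, and applying the monotonicity estimate in the correct direction: replacing $T+w$ by the smaller $w$ enlarges the integrand, which is what produces the desired inequality (rather than the equality of Lemma \ref{a1}).
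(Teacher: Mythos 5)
Your proof is correct and follows essentially the same route as the paper's: both reduce the integral to $\int_{D^m_h}\prod_{j=1}^m u_j^{-a_j}\,du$ over the unshifted simplex and then invoke Lemma \ref{a1} with $h$ in place of $T$. The only cosmetic difference is that you compare integrands pointwise via $(T+w)^{-a_1}\le w^{-a_1}$ before integrating in the first variable, whereas the paper integrates $u_1$ exactly first and then applies the integrated form of the same estimate, $(T+x)^{1-a_1}-T^{1-a_1}\le x^{1-a_1}$.
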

\begin{proof} For $i=1,2,\cdots, m$, let $d\overline{u}_i=\prod^m_{j=i}du_j$. Then
\begin{align*}
\int_{D^m_{T,h}}\prod^{m}_{j=1} u_j^{-a_i}\, du
&=\frac{1}{1-a_1} \int_{D^{m-1}_h}[(T+h-\sum^m_{j=2} u_j)^{1-a_1}-T^{1-a_1}]\, \prod^{m}_{j=2} u_j^{-a_i}\, d\overline{u}_2\\
&\leq \frac{1}{1-a_1} \int_{D^{m-1}_h}(h-\sum^m_{j=2} u_j)^{1-a_1}\, \prod^{m}_{j=2} u_j^{-a_i}\, d\overline{u}_2\\
&=\int_{D^m_h}\prod^{m}_{j=1} u_j^{-a_i}\, du\\
&=\frac{\prod^m_{j=1}\Gamma(1-a_j)}{\Gamma(\sum^{m}_{i=1}(1-a_i)+1)}h^{\sum\limits^{m}_{i=1}(1-a_i)},
\end{align*}
where we used Lemma \ref{a1} in the last equality.
\end{proof}

\begin{lemma} \label{a3}  Assume that $T>0$ and $\alpha\geq 0$. Then for all $\varepsilon\in(0,T/2)$,
\[
\int^T_0 (u^{\frac{2H_1H_2}{H_1+H_2}}+\varepsilon)^{-\frac{d}{2}-\alpha} du\asymp \left\{\begin{array}{cc} \varepsilon^{\frac{H_1+H_2}{2H_1H_2}-\frac{d}{2}-\alpha} & \text{if}\; \frac{H_1+H_2}{H_1H_2}<d+2\alpha \\
\ln(1+\varepsilon^{-\frac{1}{2}}) &   \text{if}\; \frac{H_1+H_2}{H_1H_2}=d+2\alpha \\  
1   &   \text{if}\; \frac{H_1+H_2}{H_1H_2}>d+2\alpha \\  \end{array} \right.
\]
and
\[
\int_{[0,T]^2} \big(u^{2H_1}+v^{2H_2}+\varepsilon\big)^{-\frac{d}{2}-\alpha} du\, dv \asymp \left\{\begin{array}{cc} \varepsilon^{\frac{H_1+H_2}{2H_1H_2}-\frac{d}{2}-\alpha} & \text{if}\; \frac{H_1+H_2}{H_1H_2}<d+2\alpha \\
\ln(1+\varepsilon^{-\frac{1}{2}}) &   \text{if}\; \frac{H_1+H_2}{H_1H_2}=d+2\alpha \\  
1   &   \text{if}\; \frac{H_1+H_2}{H_1H_2}>d+2\alpha \\  \end{array} \right.,
\]
where $f(\varepsilon)\asymp g(\varepsilon)$ means that the ratio $f(\varepsilon)/g(\varepsilon)$ is bounded from below and above by positive constants not depending on $\varepsilon\in(0,T/2)$.
\end{lemma}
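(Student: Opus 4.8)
The plan is to reduce both estimates to a single \emph{scaling} substitution that extracts the power of $\varepsilon$ recorded in each case, after which the remaining integral carries $\varepsilon$ only through the extent of its domain; one then analyzes how that integral behaves as its domain swells to fill the whole quadrant. Throughout, write $\rho=\frac d2+\alpha$; the restriction $\varepsilon\in(0,T/2)$ will guarantee that all upper limits stay bounded away from $0$, so the implied constants in $\asymp$ are uniform.

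For the one-dimensional integral, set $\gamma=\frac{2H_1H_2}{H_1+H_2}$ and substitute $u=\varepsilon^{1/\gamma}w$. Since $u^{\gamma}+\varepsilon=\varepsilon(w^{\gamma}+1)$ and $\tfrac1\gamma=\frac{H_1+H_2}{2H_1H_2}$, this yields
\[
\int_0^T(u^{\gamma}+\varepsilon)^{-\rho}\,du=\varepsilon^{\frac{H_1+H_2}{2H_1H_2}-\frac d2-\alpha}\int_0^{T\varepsilon^{-1/\gamma}}(w^{\gamma}+1)^{-\rho}\,dw.
\]
The decay exponent of the new integrand at infinity is $\gamma\rho=\frac{H_1H_2}{H_1+H_2}(d+2\alpha)$, so the three hypotheses $\frac{H_1+H_2}{H_1H_2}<d+2\alpha$, $=d+2\alpha$, $>d+2\alpha$ are precisely $\gamma\rho>1$, $=1$, $<1$. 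When $\gamma\rho>1$ the truncated integral converges to a finite positive constant, leaving the prefactor. When $\gamma\rho=1$ the integrand is $\asymp w^{-1}$ near infinity, so the truncated integral is $\asymp\ln(T\varepsilon^{-1/\gamma})\asymp\ln(1+\varepsilon^{-1/2})$, while the prefactor is $\varepsilon^{0}=1$. When $\gamma\rho<1$ the truncated integral is $\asymp(\varepsilon^{-1/\gamma})^{1-\gamma\rho}$, whose product with the prefactor is $\asymp1$. Two-sided bounds in each regime are routine because the integrand is monotone and explicitly comparable to a power.

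For the two-dimensional integral I would use the anisotropic substitution $u=\varepsilon^{1/(2H_1)}p$, $v=\varepsilon^{1/(2H_2)}q$, under which $u^{2H_1}+v^{2H_2}+\varepsilon=\varepsilon(p^{2H_1}+q^{2H_2}+1)$ and the Jacobian contributes $\varepsilon^{\frac1{2H_1}+\frac1{2H_2}}=\varepsilon^{\frac{H_1+H_2}{2H_1H_2}}$, so that
\[
\int_{[0,T]^2}(u^{2H_1}+v^{2H_2}+\varepsilon)^{-\rho}\,du\,dv=\varepsilon^{\frac{H_1+H_2}{2H_1H_2}-\frac d2-\alpha}\,K(\varepsilon),
\]
with $K(\varepsilon)=\int_0^{R_1}\!\int_0^{R_2}(p^{2H_1}+q^{2H_2}+1)^{-\rho}\,dp\,dq$ and $R_i=T\varepsilon^{-1/(2H_i)}$. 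The key structural fact is that $\Phi(p,q)=p^{2H_1}+q^{2H_2}$ is quasi-homogeneous of degree one under $(p,q)\mapsto(\lambda^{1/(2H_1)}p,\lambda^{1/(2H_2)}q)$, while Lebesgue measure has degree $D:=\frac{H_1+H_2}{2H_1H_2}$. Decomposing the quadrant into dyadic shells $A_n=\{2^n\le\Phi<2^{n+1}\}$, this scaling gives $|A_n|\asymp2^{nD}$ and hence $\int_{A_n}(\Phi+1)^{-\rho}\asymp2^{n(D-\rho)}$. Since $R_i^{2H_i}=T^{2H_i}\varepsilon^{-1}$ is of order $\varepsilon^{-1}$ for $i=1,2$, the rectangle $[0,R_1]\times[0,R_2]$ is sandwiched between two quasi-balls $\{\Phi\le C\varepsilon^{-1}\}$, so $K(\varepsilon)\asymp\sum_{n=0}^{N}2^{n(D-\rho)}$ with $N\asymp\log_2(\varepsilon^{-1})$. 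Summing this geometric-type series reproduces the same trichotomy, the critical comparison now being $D$ versus $\rho$, i.e. once more $\frac{H_1+H_2}{H_1H_2}$ versus $d+2\alpha$: for $D<\rho$ the sum stays bounded ($K\asymp1$); for $D=\rho$ the sum has $\asymp N\asymp|\ln\varepsilon|\asymp\ln(1+\varepsilon^{-1/2})$ equal terms; for $D>\rho$ the sum is $\asymp2^{N(D-\rho)}\asymp\varepsilon^{\rho-D}$, cancelling the prefactor.

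I expect the main obstacle to be the two-dimensional borderline and divergent cases, where the rectangular truncation must be replaced by quasi-homogeneous shells and matching two-sided bounds established; the comparison $R_i^{2H_i}\asymp\varepsilon^{-1}$ for $i=1,2$ is exactly what makes the rectangle and the quasi-ball interchangeable up to constants and keeps the shell count proportional to $|\ln\varepsilon|$. Everything else is direct computation with monotone integrands and explicit power comparisons.
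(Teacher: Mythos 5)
Your argument is correct, but it takes a genuinely different route from the paper's. The paper disposes of the one-dimensional integral by quoting Lemma 2.2 of Wu and Xiao \cite{wx} directly, and reduces the two-dimensional one to a single radial integral via the generalized polar substitution $u^{H_1}=r\cos\theta$, $v^{H_2}=r\sin\theta$, which yields $\int r^{\frac{H_1+H_2}{H_1H_2}-1}(r^2+\varepsilon)^{-\frac d2-\alpha}\,dr$ sandwiched between the radii $T^{H_1}\wedge T^{H_2}$ and $2T^{H_1}\vee 2T^{H_2}$, after which the same cited lemma is invoked again. You instead give a self-contained proof: the scaling $u=\varepsilon^{1/\gamma}w$ in one dimension, and in two dimensions the anisotropic scaling followed by a dyadic decomposition into quasi-homogeneous shells of $\Phi=p^{2H_1}+q^{2H_2}$, with the rectangle sandwiched between two quasi-balls $\{\Phi\le c\varepsilon^{-1}\}$ and $\{\Phi\le C\varepsilon^{-1}\}$ --- this sandwiching plays exactly the role of the paper's two radial cut-offs. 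Both arguments ultimately rest on the same quasi-homogeneity of degree $D=\frac{H_1+H_2}{2H_1H_2}$; the paper's is shorter because it outsources the one-dimensional trichotomy to \cite{wx}, while yours is fully elementary and makes explicit the competition between the shell measure $2^{nD}$ and the decay $2^{-n\rho}$. One cosmetic remark: the intermediate comparison $N\asymp|\ln\varepsilon|$ degenerates near $\varepsilon=1$ when $T>2$, but since both $K(\varepsilon)$ and $\ln(1+\varepsilon^{-1/2})$ are bounded between positive constants on any part of $(0,T/2]$ bounded away from $0$, the stated conclusion is unaffected.
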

\begin{proof}  This first result follows from Lemma 2.2 in \cite{wx}. To get the second one,  we make the change of variables $u^{H_1}=r\cos\theta$ and $v^{H_2}=r\sin\theta$. It is easy to see that  $\int_{[0,T]^2} \big(u^{2H_1}+v^{2H_2}+2\varepsilon\big)^{-\frac{d}{2}-\alpha} du\, dv$ is less than a constant multiple of  $\int^{2T^{H_1}\vee 2T^{H_2}}_0\frac{r^{\frac{H_1+H_2}{H_1H_2}-1}}{ (r^2+\varepsilon)^{\frac{d}{2}+\alpha}} dr$ and greater that a constant multiple of $\int^{T^{H_1}\wedge T^{H_2}}_0\frac{r^{\frac{H_1+H_2}{H_1H_2}-1}}{ (r^2+\varepsilon)^{\frac{d}{2}+\alpha}} dr$. The desired result now follows from Lemma 2.2 in \cite{wx}.
\end{proof}

\begin{lemma} \label{a4} For any $m\in\N$, $x_j\in\R^d$ with $j=1,\cdots, m$ and $x_{m+1}=0$, 
\begin{align*}
\sum^m_{j=1} |x_j-x_{j+1}|^2\geq \frac{2}{m(m+1)} \sum^m_{j=1} |x_j|^2.
\end{align*}
\end{lemma}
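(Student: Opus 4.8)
The plan is to control $\sum_{j=1}^m |x_j|^2$ from above by a multiple of $\sum_{j=1}^m |x_j-x_{j+1}|^2$ and then rearrange. The key is the telescoping identity furnished by the boundary condition $x_{m+1}=0$: for each $j\in\{1,\dots,m\}$,
\[
x_j=\sum_{k=j}^m (x_k-x_{k+1}).
\]
First I would record this, writing each $x_j$ as a sum of $m-j+1$ consecutive differences $d_k:=x_k-x_{k+1}$.

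Next I would apply the elementary Cauchy--Schwarz bound $\big|\sum_{k=1}^n v_k\big|^2\le n\sum_{k=1}^n|v_k|^2$ for vectors $v_k\in\R^d$ to the sum above, which has $m-j+1$ terms, obtaining
\[
|x_j|^2\le (m-j+1)\sum_{k=j}^m |x_k-x_{k+1}|^2.
\]
Since this is carried out directly with the Euclidean norm on $\R^d$, no reduction to the scalar case $d=1$ is needed. Summing over $j$ and interchanging the order of summation, the coefficient multiplying $|x_k-x_{k+1}|^2$ becomes $\sum_{j=1}^k (m+1-j)$.

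I would then bound this weight uniformly in $k$. As every summand is nonnegative,
\[
\sum_{j=1}^k (m+1-j)\le \sum_{j=1}^m (m+1-j)=\frac{m(m+1)}{2}\qquad\text{for all }k\le m,
\]
which gives $\sum_{j=1}^m|x_j|^2\le \frac{m(m+1)}{2}\sum_{k=1}^m|x_k-x_{k+1}|^2$; dividing through yields exactly the asserted constant $\frac{2}{m(m+1)}$.

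There is no real obstacle here, since the statement is elementary. The only point deserving attention is the sharpness of the constant. Keeping track of the \emph{decreasing} number of summands $m-j+1$ in the telescoped representation is precisely what produces the factor $\frac{m(m+1)}{2}$; a cruder estimate that replaces $m-j+1$ by $m$ throughout would only yield the weaker constant $1/m^2$ in place of $2/(m(m+1))$.
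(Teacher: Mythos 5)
Your argument is correct and is essentially identical to the paper's proof: both telescope $x_j=\sum_{k=j}^m(x_k-x_{k+1})$ using $x_{m+1}=0$, apply the Cauchy--Schwarz bound $\bigl|\sum_{k=j}^m v_k\bigr|^2\le (m-j+1)\sum_{k=j}^m|v_k|^2$, and sum over $j$ to extract the constant $\frac{m(m+1)}{2}$. No differences worth noting.
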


\begin{proof}
Make the change of variables $y_j=x_j-x_{j+1}$ for $j=1,\dots,m$. Then 
\begin{align*}
\sum^m_{j=1} |x_j|^2
&=\sum^m_{j=1} |\sum^m_{k=j}y_k|^2\leq \sum^m_{j=1} (m-j+1)\sum^m_{k=j}|y_k|^2\leq \frac{m(m+1)}{2} \sum^m_{k=1}|y_k|^2.
\end{align*}
Substituting the original variables back gives the desired inequality.
\end{proof}

\bigskip

\bigskip

$\begin{array}{cc}
\begin{minipage}[t]{1\textwidth}
{\bf Minhao Hong}\\
School of Statistics, East China Normal University, Shanghai 200262, China \\
\texttt{hongmhecnu@foxmail.com}
\end{minipage}
\hfill
\end{array}$

\medskip

$\begin{array}{cc}
\begin{minipage}[t]{1\textwidth}
{\bf Fangjun Xu}\\
Key Laboratory of Advanced Theory and Application in Statistics and Data Science - MOE, School of Statistics, East China Normal University, Shanghai, 200062, China \\
NYU-ECNU Institute of Mathematical Sciences at NYU Shanghai, 3663 Zhongshan Road North, Shanghai, 200062, China\\
\texttt{fangjunxu@gmail.com, fjxu@finance.ecnu.edu.cn}
\end{minipage}
\hfill
\end{array}$


\begin{thebibliography}{99}

\bibitem{Be} S. M. Berman:  Local nondeterminism and local times of Gaussian processes.
\textit{Indiana Univ. Math.J.}, {\bf 23},  69--94, 1973.


\bibitem{ghx} J. Guo, Y. Hu and Y. Xiao: High-order derivative of intersection local time for two independent fractional Brownian motions. \textit{J. Theor.  Probab.}, in press.

\bibitem{gh}  D. Geman and J. Horowitz: Occupation densities.  \textit{Ann. Probab.},  {\bf 8}, 1--67,  1980.

\bibitem{ghr}  D. Geman and J. Horowitz and J. Rosen: A local time analysis of intersections of Brownian paths in the plane. \textit{Ann. Probab.},  {\bf 12}, 86--107, 1984.



\bibitem{jnp} A. Jaramillo, I. Nourdin and G. Peccati: Approximation of fractional local times zero energy and weak derivatives. arXiv: 1903.08683.

\bibitem{no}  D. Nualart and S. Ortiz-Latorre: Intersection local time for two independent fractional Brownian motions. \textit{J. Theor. Probab.}, 20, 759--757, 2007.


\bibitem{nx2} D. Nualart and F. Xu: Asymptotic behavior for an additive functional of two independent self-similar Gaussian processes. \textit{Stochastic Process. Appl.}, in press.

\bibitem{p} L. Pitt: Local times for Gaussian vector fields. \textit{Indiana Univ. Math.J.}, {\bf 27}(2), 309--330, 1978.

\bibitem{sxy} J. Song, F. Xu and Q. Yu: Limit theorems for functionals of two independent Gaussian processes. \textit{Stochastic Process. Appl.}, in press.


\bibitem{wx}   D. Wu and Y. Xiao: Regularity of intersection local times of fractional Brownian motions.  \textit{J. Theor. Probab.}, {\bf 23}(4), 972--1001, 2010.


\bibitem{y} L. Yan: Derivative for the intersection local time of fractional Brownian motions. arXiv:1403.4102.
\end{thebibliography}
\end{document}